\renewcommand{\@asparaenum@}{%
  \expandafter\list\csname label\@enumctr\endcsname{%
    \usecounter{\@enumctr}%
    \labelwidth\z@
    \labelsep.5em
    \leftmargin\z@
    \parsep\parskip
    \itemsep\z@
    \topsep\z@
    \partopsep\parskip
    \itemindent\parindent
    \advance\itemindent\labelsep
    \def\makelabel##1{\upshape ##1}}}
\theoremstyle{plain}
\newtheorem{theorem}{Theorem}[section]
\newtheorem{proposition}[theorem]{Proposition}
\newtheorem{lemma}[theorem]{Lemma}
\newtheorem{corollary}[theorem]{Corollary}
\newtheorem*{theorem*}{Theorem}
\theoremstyle{definition}
\newtheorem{example}[theorem]{Example}
\theoremstyle{remark}
\newtheorem*{remark}{Remark}
\numberwithin{equation}{section}
\newcommand{\Kb}{\Bbbk}
\newcommand{\Nb}{\mathbb{N}}
\newcommand{\Rb}{\mathbb{R}}
\newcommand{\Fb}{\mathbb{F}}
\newcommand{\bdot}{\bm\cdot}
\DeclareMathOperator{\area}{sch}
\newcommand{\id}{\mathrm{id}}
\newcommand{\onto}{\twoheadrightarrow}
\newcommand{\into}{\hookrightarrow}
\newcommand{\iso}{\cong}
\newcommand{\xyinc}{\ar@{^{(}->}}
\newcommand{\xyrinc}{\ar@{_{(}->}}
\newcommand{\xyonto}{\ar@{->>}}
\newcommand{\xytwo}{\ar@{<->}}
\newcommand{\map}[1]{\xrightarrow{#1}}
\newcommand{\abs}[1]{\lvert#1\rvert} 
\DeclareMathOperator{\rank}{rank}
\def\llb{{[\![}}
\def\rrb{{]\!]}}
\newcommand{\Fset}{\mathsf{set^{\times}}}
\newcommand{\Vect}{\mathsf{Vec}} 
\newcommand{\Ss}{\mathsf{Sp}}
\newcommand{\Sr}{\mathrm{S}} 
\newcommand{\tone}{\mathbf{1}}
\newcommand{\wX}{\mathbf{X}}
\newcommand{\thh}{\mathbf{h}} 
\newcommand{\tk}{\mathbf{k}}
\newcommand{\tp}{\mathbf{p}} 
\newcommand{\tq}{\mathbf{q}}
\newcommand{\tr}{\mathbf{r}}
\newcommand{\wL}{\mathbf{L}} 
\newcommand{\wI}{\mathbf{I}}
\newcommand{\tLL}{\wI\hspace*{-2pt}\wL} 
\newcommand{\tPi}{\boldsymbol{\Pi}} 
\newcommand{\te}{\mathbf{e}} 
\newcommand{\SC}{\bm{\mathrm{scf}}} 
\newcommand{\Un}{\mathrm{U}}
\newcommand{\Tc}{\mathcal{T}}
\newcommand{\Tcq}{\Tc_q}
\newcommand{\Kc}{\mathcal{K}}
\newcommand{\Kcb}{\overline{\Kc}}
\newcommand{\type}[2]{\mathsf T_{#1}(#2)}
\newcommand{\ordi}[2]{\mathsf O_{#1}(#2)}
\newcommand{\qand}{\quad\text{and}\quad}
\newcommand{\qor}{\quad\text{or}\quad}
\begin{document}


\title{On the {H}adamard product of {H}opf monoids}

\dedicatory{Dedicated to the memory of Jean-Louis Loday}

\author[M.~Aguiar]{Marcelo Aguiar}
\address{Department of Mathematics\\
Texas A\&M University\\
College Station, TX 77843}
\email{maguiar@math.tamu.edu}
\urladdr{http://www.math.tamu.edu/~maguiar}
\thanks{Aguiar supported in part by NSF grant DMS-1001935.}

\author[S.~Mahajan]{Swapneel Mahajan}
\address{Department of Mathematics\\
Indian Institute of Technology Mumbai\\
Powai, Mumbai 400 076\\ India}
\email{swapneel@math.iitb.ac.in}
\urladdr{http://www.math.iitb.ac.in/~swapneel}

\subjclass[2010]{Primary 16T30, 18D35, 20B30; 
Secondary 18D10, 20F55}


\keywords{Species; Hopf monoid; Hadamard product; generating function; Boolean transform.
}

\date{\today}

\begin{abstract}
Combinatorial structures which compose and decompose give rise to Hopf monoids
in Joyal's category of species. The Hadamard product of two Hopf monoids
is another Hopf monoid. We prove two main results regarding freeness of
Hadamard products. The first one states
that if one factor is connected and the other is free as a monoid, 
their Hadamard product is free (and connected). 
The second provides an explicit basis for the Hadamard
product when both factors are free. 

The first main result is obtained by showing the existence of a one-parameter deformation
of the comonoid structure and appealing to a rigidity result of Loday and Ronco
which applies when the parameter is set to zero.
To obtain the second result, we introduce an operation on species which is intertwined
by the free monoid functor with the Hadamard product.
As an application of the first result, we deduce that the dimension sequence of
a connected Hopf monoid satisfies the following condition: except for the first,
all coefficients of the reciprocal of its generating function are nonpositive.
\end{abstract}

\maketitle

\tableofcontents

\section*{Introduction}\label{s:intro}

Combinatorial structures are often equipped with operations which allow to
combine two given structures of a given type into a third and vice versa. 
This leads to the construction of algebraic structures, particularly that of graded
Hopf algebras. When the former are formalized through
the notion of species, which keeps track of the underlying ground set
of the combinatorial structure,
it is possible to construct finer algebraic structures than the latter.
This leads to Hopf monoids in the category of species.
The basic theory of these objects is laid out in~\cite[Part~II]{AguMah:2010},
along with the discussion of several examples.
 Section~\ref{s:hopf} reviews basic material concerning species and Hopf monoids.

Free monoids are the subject of Section~\ref{s:freemonoid}. Just as the tensor algebra
of a vector space carries a canonical structure of Hopf algebra, the free monoid on
a positive species carries one of Hopf monoid. In fact, this structure admits a one
parameter deformation, meaningful even when the parameter $q$ is set to zero. 
The deformation only concerns the comonoid structure;
the monoid structure stays fixed throughout. 
A rigidity result (Theorem~\ref{t:0free}) applies when $q=0$ and makes
this case of particular importance. It states that a connected $0$-Hopf monoid is
necessarily free as a monoid. This is a version of a result for Hopf algebras
of Loday and Ronco~\cite[Theorem~2.6]{LodRon:2006}.

Section~\ref{s:freeness} contains our two main results;
they concern freeness under Hadamard products.
The Hadamard product is a basic operation on species which reflects into the
familiar Hadamard product of the dimension sequences. While there is also a
version of this operation for graded (co)algebras, the case of species is distinguished
by the fact that the Hadamard product of two Hopf monoids is another Hopf monoid
(Proposition~\ref{p:hadamard}). In fact, the Hadamard product 
of a $p$-Hopf monoid $\thh$ and a $q$-Hopf monoid $\tk$ is a $pq$-Hopf monoid $\thh\times\tk$. 
Combining this result with rigidity for  connected $0$-Hopf monoids
we obtain our first main result (Theorem~\ref{t:freeness}).
It states that if $\thh$ is connected and $\tk$ is free as a monoid,
then $\thh\times\tk$ is free as a monoid.
A number of freeness results in the literature (for certain Hopf monoids as well as Hopf algebras)
are consequences of this fact. 
The second main result (Theorem~\ref{t:had-free-monoid})
provides an explicit basis for the Hadamard
product when both factors are free monoids.
To this end, we introduce an operation on species which intertwines
with the Hadamard product via the free monoid functor.

The previous results entail enumerative implications on the dimension sequence of a Hopf monoid.
These are explored in Section~\ref{s:genfun}. They can be conveniently
formulated in terms of the Boolean transform of a sequence (or power series),
since the type generating function of a positive species $\tp$ is the Boolean transform of that of the free monoid on $\tp$.
We deduce that the Boolean transform of the
dimension sequence of a connected Hopf monoid
is nonnegative (Theorem~\ref{t:ordi-hopf}).
This turns out to be stronger than several previously known conditions on the
dimension sequence of a connected Hopf monoid. 
We provide examples of sequences with nonnegative Boolean transform which
do not arise as the dimension sequence of any connected Hopf monoid,
showing that the converse of Theorem~\ref{t:ordi-hopf} does not hold (Proposition~\ref{p:ordi-hopf}).

Appendix~\ref{s:boolean} contains additional information on Boolean transforms;
in particular, Proposition~\ref{p:bool-had} provides
an explicit formula for the Boolean transform of the Hadamard product
of two sequences (in terms of the transforms of the factors). This implies that
the set of real sequences with nonnegative Boolean transform is closed under Hadamard products.

\section{Species and Hopf monoids}\label{s:hopf}

We briefly review Joyal's notion of species~\cite{BerLabLer:1998,Joy:1981}
and of Hopf monoid in the category of species. For more details on the latter,
see~\cite{AguMah:2010}, particularly Chapters 1, 8 and 9.

\subsection{Species and the Cauchy product}\label{ss:cauchy}

Let $\Fset$ denote the category whose objects are finite sets and 
whose morphisms are bijections.
Let $\Kb$ be a field and let $\Vect$ denote the category whose objects are vector spaces over $\Kb$
and whose morphisms are linear maps.

A \emph{(vector) species} is a functor
\[
\Fset \longrightarrow \Vect.
\]
Given a species $\tp$, its value on a finite set $I$ is denoted by $\tp[I]$.
A morphism between species $\tp$ and $\tq$
is a natural transformation between the functors $\tp$ and $\tq$.
Let $\Ss$ denote the category of species.

Given a set $I$ and subsets $S$ and $T$ of $I$, the notation $I=S\sqcup T$
indicates that
\[
I=S\cup T \qand S\cap T = \emptyset.
\]
We say in this case that the ordered pair $(S,T)$ is a \emph{decomposition} of $I$.

Given species $\tp$ and $\tq$, their \emph{Cauchy product} is the species 
$\tp \bdot \tq$ defined on a finite set $I$ by
\begin{equation}\label{e:cau} 
(\tp \bdot \tq)[I] 
 := \bigoplus_{I = S \sqcup T}  \tp[S] \otimes \tq[T].
 \end{equation}
 The direct sum is over all decompositions $(S,T)$ of $I$,
 or equivalently over all subsets $S$ of $I$.
 On a bijection $\sigma: I\to J$, 
 $(\tp \bdot \tq)[\sigma]$ is defined to be the direct sum of the maps
 \[
\tp[S] \otimes \tq[T]\map{\tp[\sigma|_S]\otimes\tp[\sigma|_T]}
\tp[\sigma(S)] \otimes \tq[\sigma(T)]
\]
over all decompositions $(S,T)$ of $I$, where $\sigma|_S$ denotes the restriction of $\sigma$ to $S$.

The operation~\eqref{e:cau} turns $\Ss$ into a monoidal category.
The unit object is the species $\tone$ defined by
\[
\tone[I]  := \begin{cases}
\Kb & \text{if $I$ is empty,} \\
0 & \text{otherwise.}
\end{cases}
\]

Let $q \in \Kb$ be a fixed scalar, possibly zero.
Consider the natural transformation
\[
\beta_q \colon  \tp \bdot \tq \to \tq \bdot \tp
\]
which on a finite set $I$ is the direct sum of the maps
\begin{equation*}
  \tp[S] \otimes \tq[T] \to \tq[T] \otimes \tp[S],
\qquad
x \otimes y \mapsto q^{\abs{S}\abs{T}}   y \otimes x
\end{equation*}
over all decompositions $(S,T)$ of $I$. The notation $\abs{S}$ stands
for the cardinality of the set $S$.

If $q$ is nonzero, then $\beta_q$ is a (strong) braiding
for the monoidal category $(\Ss,\bdot)$.
In this case, the inverse braiding is $\beta_{q^{-1}}$,
and $\beta_q$ is a symmetry if and only if $q=\pm 1$.
The natural transformation $\beta_0$ is a lax braiding
for $(\Ss,\bdot)$.

\subsection{Hopf monoids in species}\label{ss:hopf}

We consider monoids and comonoids in the monoidal category $(\Ss,\bdot)$
and bimonoids and Hopf monoids in the braided monoidal category $(\Ss,\bdot,\beta_q)$.
We refer to the latter as $q$-\emph{bimonoids} and $q$-\emph{Hopf monoids}.
When $q=1$, we speak simply of \emph{bimonoids} and \emph{Hopf monoids}.

The structure of a monoid $\tp$ consists of morphisms of species $\mu:\tp\bdot\tp\to\tp$
and $\iota:\tone\to\tp$ subject to the familiar associative and unital axioms.
In view of~\eqref{e:cau}, the product $\mu$ consists of a collection of linear maps
\[
\mu_{S,T} : \tp[S]\otimes\tp[T] \to \tp[I],
\]
one for each finite set $I$ and each decomposition $(S,T)$ of $I$.
The unit $\iota$ reduces to a linear map
\[
\iota_\emptyset: \Kb \to \tp[\emptyset].
\]
Similarly, the structure of a comonoid $\tq$ consists of linear maps
\[
\Delta_{S,T}: \tq[I]\to\tq[S]\otimes\tq[T]
\qand
\epsilon_\emptyset: \tq[\emptyset] \to \Kb.
\]

Let $I=S\sqcup T=S'\sqcup T'$ be two decompositions of a finite set.
The compatibility axiom for $q$-Hopf monoids
states that the diagram 
\begin{equation}\label{e:compr}
\begin{gathered}
\xymatrix@R+2pc@C-5pt{
\thh[A] \otimes \thh[B] \otimes \thh[C] \otimes \thh[D] \ar[rr]^{\id
\otimes \beta_q \otimes \id} & &
\thh[A] \otimes \thh[C] \otimes \thh[B] \otimes \thh[D] \ar[d]^{\mu_{A,C}
\otimes \mu_{B,D}}\\
\thh[S] \otimes \thh[T] \ar[r]_-{\mu_{S,T}}\ar[u]^{\Delta_{A,B} \otimes
\Delta_{C,D}} & \thh[I] \ar[r]_-{\Delta_{S',T'}} & \thh[S'] \otimes
\thh[T']
}
\end{gathered}
\end{equation}
commutes, where $A=S\cap S'$, $B=S\cap T'$, $C=T\cap S'$, $D=T\cap T'$.
For more details, see~\cite[Sections~8.2 and~8.3]{AguMah:2010}.

\subsection{Connected species and Hopf monoids}\label{ss:con-hopf}

A species $\tp$ is \emph{connected} if $\dim_{\Kb} \tp[\emptyset]=1$.
In a connected monoid, the map $\iota_\emptyset$ is an isomorphism $\Kb\cong\tp[\emptyset]$, and the resulting maps
\[
\tp[I] \cong \tp[I]\otimes\tp[\emptyset] \map{\mu_{I,\emptyset}} \tp[I]
\qand
\tp[I] \cong \tp[\emptyset]\otimes\tp[I] \map{\mu_{\emptyset,I}} \tp[I]
\]
are identities. Thus, to provide a monoid structure on a connected species
it suffices to specify the maps $\mu_{S,T}$ when $S$ and $T$ are nonempty.
A similar remark applies to connected comonoids.

Choosing $S=S'$ and $T=T'$ in~\eqref{e:compr} one obtains that
for a connected $q$-bimonoid $\thh$
the composite
\[
\thh[S]\otimes\thh[T] \map{\mu_{S,T}} \thh[I] \map{\Delta_{S,T}} \thh[S]\otimes\thh[T]
\]
is the identity.

A connected $q$-bimonoid is automatically a $q$-Hopf monoid; see~\cite[Sections~8.4 and~9.1]{AguMah:2010}. The \emph{antipode} of a Hopf monoid will not concern us in this paper.

\subsection{The Hopf monoid of linear orders}\label{ss:linear}

The $q$-Hopf monoid $\wL_q$ is defined as follows. The vector space  $\wL_q[I]$
has for basis the set of linear orders on the finite set $I$. The product and coproduct
are defined by \emph{concatenation} and \emph{restriction}, respectively:
\begin{align*}
\mu_{S,T} : \wL_q[S] \otimes \wL_q[T] & \to \wL_q[I] & \Delta_{S,T}: \wL_q[I] & \to \wL_q[S] \otimes \wL_q[T] \\
l_1 \otimes l_2 & \mapsto l_1 \cdot l_2 & l & \mapsto q^{\area_{S,T}(l)}\, l|_S \otimes l|_T.
\end{align*}Here $l_1 \cdot l_2$ is the linear order on $I$
whose restrictions to $S$ and $T$ are $l_1$ and $l_2$
and in which the elements of $S$ precede the elements of $T$,
and $l|_S$ is the restriction 
of the linear order $l$ on $I$
to the subset $S$. 
The \emph{Schubert cocycle} is
\begin{equation}\label{e:schubert-linear}
\area_{S,T}(l) := \abs{\{(i,j)\in S\times T \mid \text{$i>j$
according to $l$}\}}.
\end{equation}

We write $\wL$ instead of $\wL_1$.
Note that the monoid structure of $\wL_q$ is independent of $q$. 
Thus, $\wL=\wL_q$ as monoids. The comonoid $\wL$ is cocommutative,
but, for $q\neq 1$, $\wL_q$ is not.

\section{The free monoid on a positive species}\label{s:freemonoid}

We review the explicit construction of the free monoid on a positive species,
following~\cite[Section~11.2]{AguMah:2010}. The free monoid carries
a canonical structure of $q$-Hopf monoid. The case $q=0$ is of particular
interest for our purposes, in view of the fact that any connected $0$-Hopf monoid is free
(Theorem~\ref{t:0free} below).

\subsection{Set compositions}\label{ss:compositions}

A \emph{composition} of a finite set $I$ is an ordered sequence $F=(I_1,\dots,I_k)$
of disjoint nonempty subsets of $I$ such that
\[
I  = \bigcup_{i=1}^k I_i.
\]
The subsets $I_i$ are the \emph{blocks}
of $F$. 
We write $F\vDash I$ to indicate that $F$ is a composition of~$I$.

There is only one composition of the empty set
(with no blocks).

Given $I=S\sqcup T$ and compositions $F=(S_1,\dots,S_j)$ of $S$
and $G=(T_1,\dots,T_k)$ of $T$, their \emph{concatenation}
\[
F\cdot G := (S_1,\dots,S_j,T_1,\dots,T_k)
\]
is a composition of $I$.

Given $S\subseteq I$ and a composition $F=(I_1,\dots,I_k)$ of $I$,
we say that $S$ is \emph{$F$-admissible} if for each $i=1,\ldots,k$, either
\[
I_i\subseteq S \qor I_i\cap S=\emptyset.
\]
In this case, we let $i_1<\cdots<i_j$ be the subsequence of $1<\cdots<k$
consisting of those indices $i$ for which $I_i\subseteq S$, and
define the \emph{restriction} of $F$ to $S$ by
\[
F|_S = (I_{i_1},\ldots,I_{i_j}).
\]
It is a composition of $S$.

Given $I=S\sqcup T$ and a composition $F=(I_1,\dots,I_k)$ of $I$,
let
\begin{equation}\label{e:schubert-comp}
\area_{S,T}(F)  := \abs{\{(i,j)\in S\times T \mid \text{$i$ appears in a strictly later block of $F$ than $j$}\}}.
\end{equation}
Alternatively,
\[
\area_{S,T}(F)  = \sum_{1 \leq i < j \leq k} \abs{I_i \cap T}\,\abs{I_j \cap S}.
\]

Still in the preceding situation, note that $S$ is $F$-admissible if and only if $T$ is.
Thus $F|_S$ and $F|_T$ are defined simultaneously.

If the blocks of $F\vDash I$ are singletons, then $F$ amounts to a linear order on $I$.
Concatenation and restriction of set compositions reduce in this case to the
corresponding operations for linear orders (Section~\ref{ss:linear}). In addition,
\eqref{e:schubert-comp} reduces to~\eqref{e:schubert-linear}.

The set of compositions of $I$ is a partial order under \emph{refinement}: we set
$F\leq G$ if each block of $F$ is obtained by merging a number of adjacent blocks of $G$. The composition $(I)$ is the unique minimum element, and linear orders are the maximal elements.

Set compositions of $I$ are in bijection with flags of subsets of $I$ via
\[
(I_1,\ldots,I_k) \mapsto (\emptyset\subset I_1\subset I_1\cup I_2\subset \cdots \subset
I_1\cup \cdots\cup I_k = I).
\]
Refinement of compositions corresponds to inclusion of flags. In this manner the poset of set compositions is
a lower set of the Boolean poset $2^{2^I}$, and hence a meet-semilattice.
The meet operation and concatenation interact as follows:
\begin{equation}\label{e:meet-conc-set}
(F\cdot F')\wedge(G\cdot G') = (F\wedge G)\cdot(F'\wedge G'),
\end{equation}
where $F,G\vDash S$ and $F',G'\vDash T$, $I=S\sqcup T$.

\begin{remark}
Set compositions of $I$ are in bijection with faces of the
\emph{braid arrangement} in $\Rb^I$. Refinement of compositions corresponds to inclusion of faces, meet to
intersection, linear orders to chambers, and $(I)$ to the central face. 
When $S$ and $T$ are nonempty, the statistic $\area_{S,T}(F)$ counts the number of hyperplanes that separate the face $(S,T)$ from $F$. For more details, see~\cite[Chapter~10]{AguMah:2010}.
\end{remark}

\subsection{The free monoid}\label{ss:free}

A species $\tq$ is \emph{positive} if $\tq[\emptyset]=0$.

Given a positive species $\tq$ and a composition $F=(I_1,\ldots,I_k)$ of $I$, write
\begin{equation}\label{e:sp-comp}
\tq(F)  := \tq[I_1] \otimes \dots \otimes \tq[I_k].
\end{equation}
We define a new species $\Tc(\tq)$ by
\[
\Tc(\tq)[I]  := \bigoplus_{F\vDash I} \tq(F).
\]
A bijection $\sigma:I\to J$ transports a composition $F=(I_1,\dots,I_k)$ of $I$
into a composition $\sigma(F):=\bigl(\sigma(I_1),\dots,\sigma(I_k)\bigr)$ of $J$.
The map $\Tc(\tq)[\sigma]:\Tc(\tq)[I]\to\Tc(\tq)[J]$ is the direct sum of the maps
\[
\tq(F) = \tq[I_1] \otimes \dots \otimes \tq[I_k] \map{\tq[\sigma|_{I_1}] \otimes \dots \otimes \tq[\sigma|_{I_k}]} 
\tq[\sigma(I_1)] \otimes \dots \otimes \tq[\sigma(I_k)] = \tq\bigl(\sigma(F)\bigr).
\]

When $F$ is the unique composition of $\emptyset$, we have $\tq(F)=\Kb$.
Thus, the species $\Tc(\tq)$ is connected.

Every nonempty $I$ admits a unique composition with one block; namely, $F=(I)$.
In this case, $\tq(F)=\tq[I]$. This yields an embedding
$
\tq[I] \into \Tc(\tq)[I]
$
and thus an embedding of species 
\[
\eta_\tq: \tq \into \Tc(\tq).
\]
On the empty set, $\eta_\tq$ is (necessarily) zero.

Given $I=S\sqcup T$ and compositions $F\vDash S$ and
$G\vDash T$, we have a canonical isomorphism
\[
 \tq(F)\otimes \tq(G)\cong \tq(F\cdot G)
\]
obtained by concatenating the factors in~\eqref{e:sp-comp}.
The sum of these over all $F\vDash S$ and
$G\vDash T$ yields a map
\[
\mu_{S,T}: \Tc(\tq)[S]\otimes\Tc(\tq)[T] \to \Tc(\tq)[I].
\]
This turns $\Tc(\tq)$ into a monoid. In fact, $\Tc(\tq)$ is the \emph{free}
monoid on the positive species $\tq$, in view of the following result (a slight reformulation of~\cite[Theorem 11.4]{AguMah:2010}).

\begin{theorem}\label{t:dmunivp}
Let $\tp$ be a monoid, $\tq$ a positive species, and
$\zeta\colon \tq \to \tp$ a morphism of species.
Then there exists a unique morphism of monoids
$\hat{\zeta}\colon \Tc(\tq) \to \tp$ such that
\begin{equation*}
\begin{gathered}
\xymatrix@C+20pt{
\Tc(\tq) \ar@{.>}[r]^-{\hat{\zeta}} &  \tp\\
 \tq \ar[ru]_{\zeta}\ar[u]^{\eta_\tq}
}
\end{gathered}
\end{equation*}
commutes.
\end{theorem}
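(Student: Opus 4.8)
The plan is to verify the universal property of the free monoid directly, by constructing $\hat\zeta$ on each finite set $I$ as the sum over compositions $F\vDash I$ of the maps obtained from $\zeta$ and the product of $\tp$, and then checking that it is the unique morphism of monoids extending $\zeta$. Concretely, for a composition $F=(I_1,\dots,I_k)$ of $I$, define a map $\zeta_F\colon \tq(F)=\tq[I_1]\otimes\dots\otimes\tq[I_k]\to\tp[I]$ as the composite
\[
\tq[I_1]\otimes\dots\otimes\tq[I_k]\map{\zeta_{I_1}\otimes\dots\otimes\zeta_{I_k}}\tp[I_1]\otimes\dots\otimes\tp[I_k]\map{\mu_{I_1,\dots,I_k}}\tp[I],
\]
where $\mu_{I_1,\dots,I_k}$ denotes the $k$-fold iterated product of $\tp$ (unambiguous by associativity, with the convention that for $k=0$ it is $\iota_\emptyset$ and for $k=1$ it is the identity). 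Then set $\hat\zeta[I]:=\bigoplus_{F\vDash I}\zeta_F$.

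First I would check naturality of $\hat\zeta$, which is immediate from naturality of $\zeta$ and of the product of $\tp$, together with the fact that a bijection $\sigma$ carries $\tq(F)$ to $\tq(\sigma(F))$ compatibly. Next I would verify that $\hat\zeta$ is a morphism of monoids, i.e. that it commutes with the products and units. For the unit this is the $k=0$ case above. For the product, given $I=S\sqcup T$ and compositions $F\vDash S$, $G\vDash T$, the relevant square compares $\zeta_{F\cdot G}$ with the composite of $\zeta_F\otimes\zeta_G$ followed by $\mu_{S,T}$ of $\tp$; since the blocks of $F\cdot G$ are those of $F$ followed by those of $G$, this reduces to associativity of the iterated product of $\tp$ (the iterated product over all blocks of $F\cdot G$ factors through the product of the iterated product over the blocks of $F$ and that over the blocks of $G$). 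Summing over all $F,G$ gives the required identity, using the description of $\mu_{S,T}$ for $\Tc(\tq)$ via the isomorphisms $\tq(F)\otimes\tq(G)\cong\tq(F\cdot G)$. Commutativity of the triangle with $\eta_\tq$ is the $k=1$ case: $\zeta_{(I)}=\zeta_I$ since the one-fold product is the identity.

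For uniqueness, suppose $\psi\colon\Tc(\tq)\to\tp$ is any morphism of monoids with $\psi\circ\eta_\tq=\zeta$. On the summand $\tq(F)$ with $F=(I_1,\dots,I_k)$, write the corresponding element of $\Tc(\tq)[I]$ as a product, via $\mu_{I_1,\dots,I_k}$ of $\Tc(\tq)$, of the images under $\eta_\tq$ of elements of $\tq[I_1],\dots,\tq[I_k]$; this is exactly the defining property of the free monoid's product built from the isomorphisms $\tq(F_1)\otimes\tq(F_2)\cong\tq(F_1\cdot F_2)$ together with the embeddings $\eta_\tq$ on one-block compositions. Since $\psi$ is a morphism of monoids it carries this iterated product in $\Tc(\tq)$ to the iterated product in $\tp$ of the $\psi\eta_\tq=\zeta$-images, which is precisely $\zeta_F$. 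Hence $\psi$ agrees with $\hat\zeta$ on every summand, so $\psi=\hat\zeta$.

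The main obstacle is bookkeeping rather than conceptual: one must be careful about the iterated product $\mu_{I_1,\dots,I_k}$ being well defined independently of the bracketing (this is associativity, but it needs to be invoked cleanly), and about the edge cases $k=0$ (unit) and $k=1$ (the embedding $\eta_\tq$), since connectedness of $\Tc(\tq)$ and positivity of $\tq$ make these slightly asymmetric. Since this is essentially a restatement of \cite[Theorem 11.4]{AguMah:2010}, I would keep the argument brief, emphasizing the explicit formula for $\hat\zeta$ and pointing to \emph{loc. cit.} for the routine diagram chases; no deformation parameter $q$ or comonoid structure enters, as the statement concerns only the monoid structure.
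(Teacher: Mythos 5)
Your construction of $\hat\zeta$ (sum over compositions of $\zeta_{I_1}\otimes\cdots\otimes\zeta_{I_k}$ followed by the iterated product of $\tp$) is exactly the formula the paper gives right after the theorem statement, and like the paper you defer the routine verifications to \cite[Theorem~11.4]{AguMah:2010}. The proposal is correct and takes essentially the same approach.
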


The map $\hat{\zeta}$ is as follows. On the empty set, it is the unit map of $\tp$:
\[
\Tc(\tq)[\emptyset] = \Kb \map{\iota_\emptyset} \tp[\emptyset].
\]
On a nonempty set $I$, it is the sum of the maps
\[
\tq(F) = \tq[I_1]\otimes\cdots\otimes\tq[I_k] \map{\zeta_{I_1}\otimes\cdots\otimes\zeta_{I_k}}
\tp[I_1]\otimes\cdots\otimes\tp[I_k]
\map{\mu_{I_1,\ldots,I_k}} \tp[I],
\]
where $\mu_{I_1,\ldots,I_k}$ denotes an iteration of the product of $\tp$
(well-defined by associativity).

\smallskip 

When there is given an isomorphism of monoids $\tp\cong\Tc(\tq)$, we say that
the positive species $\tq$ is a \emph{basis} of the (free) monoid $\tp$.

\begin{remark}
The free monoid $\Tc(\tq)$ on an arbitrary species $\tq$ exists~\cite[Example~B.29]{AguMah:2010}. One has that $\Tc(\tq)[\emptyset]$ is the free associative unital algebra
on the vector space $\tq[\emptyset]$. Thus, $\Tc(\tq)$ is connected if and only if
$\tq$ is positive. We only consider this case in this paper.
\end{remark}

\subsection{The free monoid as a Hopf monoid}\label{ss:freeHopf}

Let $q\in\Kb$ and $\tq$ a positive species. The species $\Tc(\tq)$ admits a
canonical $q$-Hopf monoid structure, which we denote by $\Tcq(\tq)$, as follows.

As monoids, $\Tcq(\tq)=\Tc(\tq)$. In particular, $\Tcq(\tq)$ and $\Tc(\tq)$ are the same
species. The comonoid structure depends on $q$. Given $I=S\sqcup T$,
the coproduct
\[
\Delta_{S,T}: \Tcq(\tq)[I]\to\Tcq(\tq)[S]\otimes\Tcq(\tq)[T]
\]
is the sum of the maps
\begin{align*}
\tq(F) & \to \tq(F|_S)\otimes \tq(F|_T) \\
x_1\otimes\cdots\otimes x_k & \mapsto 
\begin{cases} 
q^{\area_{S,T}(F)} (x_{i_1}\otimes\cdots\otimes x_{i_j})\otimes (x_{i'_1}\otimes\cdots\otimes x_{i'_k}) & \text{ if $S$ is $F$-admissible,}\\
0 & \text{ otherwise.}
\end{cases}
\end{align*}
Here $F=(I_1,\ldots,I_k)$ and $x_i\in \tq[I_i]$ for each $i$.
In the admissible case, we have written $F|_S=(I_{i_1},\ldots,I_{i_j})$ and
$F|_T=(I_{i'_1},\ldots,I_{i'_k})$.

The preceding turns $\Tcq(\tq)$ into a $q$-bimonoid. Since it is connected,
it is a $q$-Hopf monoid. 

\subsection{Freeness of the Hopf monoid of linear orders}\label{ss:free-linear}

Let $\wX$ be the species defined by
\[
\wX[I]  := \begin{cases}
\Kb & \text{if $I$ is a singleton,} \\
0 & \text{otherwise.}
\end{cases}
\]
It is positive. Note that
\begin{equation}\label{e:XF}
\wX(F) \cong 
\begin{cases}
\Kb & \text{ if all blocks of $F$ are singletons,} \\
 0       & \text{ otherwise.}
\end{cases}
\end{equation}
Since a set composition of $I$ into singletons amounts to a linear order on $I$, we have 
$\Tc(\wX)[I]\cong \wL[I]$ for all finite sets $I$.
This gives rise to a canonical isomorphism of species 
\[
\Tc(\wX) \cong \wL.
\]
Moreover, the closing remarks in Section~\ref{ss:compositions}
imply that this is an isomorphism of $q$-Hopf monoids
\[
\Tcq(\wX) \cong \wL_q.
\]
In particular, $\wL$ is the free monoid on the species $\wX$.

\subsection{Loday-Ronco freeness for $0$-Hopf monoids}\label{ss:0free}

The $0$-Hopf monoid 
$\Tc_0(\tq)$ has the same underlying species and the same product
as the Hopf monoid $\Tc(\tq)$ (Section~\ref{ss:free}).
We now discuss the coproduct, by setting $q=0$ in the description
of Section~\ref{ss:freeHopf}.
Fix a decomposition $I=S\sqcup T$.
The compositions $F\vDash I$ that contribute to $\Delta_{S,T}$
are those for which
$S$ is $F$-admissible and in addition $\area_{S,T}(F)=0$. This happens if and only if
\[
F = F|_S \cdot F|_T.
\]
When $S,T\neq\emptyset$, the preceding is in turn equivalent to
\begin{equation}\label{e:0free}
(S,T)\leq F.
\end{equation}
Therefore, the coproduct $\Delta_{S,T}$ of $\Tc_0(\tq)$
is the direct sum over all $F\vDash I$ of the above form of the maps 
\begin{align*}
\tq(F) & \to \tq(F|_S)\otimes \tq(F|_T) \\
x_1\otimes\cdots\otimes x_k & \mapsto  
 (x_{1}\otimes\cdots\otimes x_{j})\otimes (x_{j+1}\otimes\cdots\otimes x_{k}).
 \end{align*}
Here $F=(I_1,\ldots,I_k)$, $S=I_1\cup\cdots\cup I_j$, and $T=I_{j+1}\cup\cdots\cup I_k$.

\begin{theorem}\label{t:0free}
Let $\thh$ be a connected $0$-Hopf monoid. Then there exist a positive species
$\tq$ and an isomorphism of $0$-Hopf monoids
\[
\thh \iso \Tc_0(\tq).
\]  
\end{theorem}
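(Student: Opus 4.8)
The plan is to construct the species $\tq$ explicitly as the space of \emph{primitives}, or rather as a natural complement to the decomposables, and then to invoke the universal property of the free monoid (Theorem~\ref{t:dmunivp}) to produce a map $\Tc_0(\tq)\to\thh$, finally checking it is an isomorphism of $0$-Hopf monoids. Concretely, since $\thh$ is connected, write $\thh_+$ for the positive part (so $\thh_+[I]=\thh[I]$ for $I\neq\emptyset$ and $\thh_+[\emptyset]=0$) and let $\thh_+\bdot\thh_+\map{\mu}\thh_+$ have image the \emph{decomposables}; define $\tq$ by choosing, naturally in $I$, a complement
\[
\thh[I] = \tq[I]\oplus(\mu\text{-image on }I)
\]
for each nonempty $I$, with $\tq[\emptyset]=0$. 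Such a natural complement exists because everything in sight is a functor on the groupoid $\Fset$ and we may average, or simply work set by set on isomorphism classes and transport; this is the standard construction of a space of generators. The inclusion $\tq\into\thh$ is then a morphism of species, and Theorem~\ref{t:dmunivp} yields a morphism of monoids $\varphi\colon\Tc(\tq)=\Tc_0(\tq)\to\thh$ (recall the monoid structure is independent of $q$).

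Next I would show $\varphi$ is an isomorphism of \emph{monoids}. Surjectivity is an induction on $\abs{I}$: on singletons $\varphi$ restricts to the identity $\tq[I]=\thh[I]$, and for larger $I$ the image contains $\tq[I]$ by construction and contains all decomposables by the induction hypothesis together with multiplicativity, hence is all of $\thh[I]$. Injectivity is the more delicate half and is where the Loday--Ronco mechanism enters: here one uses that $\thh$ is a $0$-bimonoid, so the coproduct $\Delta_{S,T}$ of $\thh$ is, on the generators and more generally, compatible with the very explicit $0$-coproduct of $\Tc_0(\tq)$ described just before the statement (where $\Delta_{S,T}$ picks out exactly the compositions $F$ with $(S,T)\leq F$ and deconcatenates). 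The key point is that for a $0$-bimonoid the iterated coproduct indexed by the finest composition gives a filtration whose associated graded is built from $\tq$, so a dimension count (or a triangularity argument with respect to the refinement order on compositions) forces $\dim\Tc_0(\tq)[I]=\dim\thh[I]$ and upgrades surjectivity to bijectivity.

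Finally, with $\varphi$ a monoid isomorphism, I would check it is also a morphism of comonoids, i.e.\ intertwines $\Delta^{\thh}$ with $\Delta^{\Tc_0(\tq)}$. Because both are connected $0$-bimonoids and $\varphi$ is an isomorphism of monoids, it suffices to verify $\Delta_{S,T}^{\thh}\circ\varphi = (\varphi\otimes\varphi)\circ\Delta_{S,T}^{\Tc_0(\tq)}$ on the generating species $\tq$ and then propagate by multiplicativity using the bimonoid compatibility axiom~\eqref{e:compr} on both sides; on $\tq[I]$ with $I\neq\emptyset$, $\Delta_{S,T}^{\Tc_0(\tq)}$ vanishes unless $S$ or $T$ is empty, and on the $\thh$ side the analogous vanishing is exactly the statement that the elements of $\tq$ map to primitives in $\thh$ — which in turn follows from the $0$-bimonoid axiom, since for a $0$-bimonoid the composite $\thh\to\thh\bdot\thh\to\thh$ forces the primitivity of any natural complement to the decomposables. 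Transporting the $0$-Hopf monoid structure along $\varphi$ then gives the desired isomorphism $\thh\iso\Tc_0(\tq)$.

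The main obstacle is the injectivity/dimension step: one must genuinely use that $q=0$ (the argument fails for $q=1$, where $\thh$ need not be free) by exploiting that the $0$-coproduct only ``sees'' deconcatenation along coarsenings of $(S,T)$, which makes the pairing between the iterated coproduct and the generators triangular with respect to the refinement order. Making this triangularity precise — identifying the right filtration on $\thh$ coming from iterated $0$-coproducts and showing its associated graded is freely generated by $\tq$ — is the technical heart, and is precisely the species-level incarnation of Loday--Ronco's Theorem~2.6 in~\cite{LodRon:2006}.
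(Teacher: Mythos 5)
The paper does not actually prove Theorem~\ref{t:0free} in the text: it is quoted as an adaptation of Loday--Ronco, with the species-level details deferred to \cite[Theorem~11.49]{AguMah:2010} and the remark that $\tq$ is the \emph{primitive part} of $\thh$. Your outline has the right skeleton (generators, universal property, surjectivity by induction, a triangularity argument for injectivity), but one of your load-bearing claims is false and another is assumed rather than proved. The false claim is that ``the $0$-bimonoid axiom forces the primitivity of any natural complement to the decomposables.'' Take $\thh=\Tc_0(\tq)$ with $\tq[I]=\Kb$ for every nonempty $I$. On $I=\{1,2\}$ the space is three-dimensional, with basis vectors $u$, $v_{12}$, $v_{21}$ indexed by the compositions $(\{1,2\})$, $(\{1\},\{2\})$, $(\{2\},\{1\})$; the decomposables are spanned by $v_{12}$ and $v_{21}$, and the line spanned by $u+v_{12}+v_{21}$ is an $\Sr_2$-stable complement that is \emph{not} primitive, since $\Delta_{\{1\},\{2\}}(u+v_{12}+v_{21})=x_1\otimes x_2\neq 0$. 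So for an arbitrary natural complement your monoid map $\varphi$ will in general fail to be a comonoid map, and the last step of your argument collapses. You must take $\tq$ to be the primitive part itself, and then the substantive assertion --- which your proposal never establishes --- is that primitives and decomposables are complementary in the positive part of $\thh$. That splitting is exactly what the Loday--Ronco idempotent $e=\sum_{k\geq 1}(-1)^{k-1}\mu^{(k-1)}\circ\Delta^{(k-1)}$ (a finite sum on each $\thh[I]$, since set compositions of $I$ have at most $\abs{I}$ blocks) provides: the $0$-compatibility~\eqref{e:compr}, in which $\beta_0$ kills all crossed terms, is what makes $e$ idempotent with image the primitives and kernel the decomposables.

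Two further gaps. Your existence argument for a natural complement (``average, or transport set by set'') needs the decomposables to be a $\Kb\Sr_n$-direct summand, which can fail when $\mathrm{char}\,\Kb$ divides $n!$; the field $\Kb$ here is arbitrary, and the canonical idempotent is precisely what replaces that choice. And the ``dimension count'' you invoke for injectivity is unavailable because finite-dimensionality is only assumed from Section~\ref{s:genfun} onward; the filtration/triangularity argument you gesture at is the correct route, but it is exactly the part you leave unexecuted, and it is where the full strength of $q=0$ enters.
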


The species $\tq$ can be obtained as the \emph{primitive part} of $\thh$.

There is a parallel result for connected graded $0$-Hopf algebras which is
due to Loday and Ronco~\cite[Theorem~2.6]{LodRon:2006}.
An adaptation of their proof yields the result for connected $0$-Hopf monoids;
the complete details are given in~\cite[Theorem~11.49]{AguMah:2010}.

\begin{remark}
Theorem~\ref{t:0free} states that any connected $0$-Hopf monoid is free as a
monoid. It is also true that it is \emph{cofree} as a comonoid; in addition, if $\tq$ is finite-dimensional,
then the $0$-Hopf monoid $\Tc_0(\tq)$ is \emph{self-dual}. See~\cite[Section~11.10.3]{AguMah:2010} for more details.
\end{remark}

\section{Freeness under Hadamard products}\label{s:freeness}

The Hadamard product of two Hopf monoids is another Hopf monoid. 
We review this construction and
we prove in Theorem~\ref{t:freeness} 
that if one of the Hopf monoids is free as a monoid, then
the Hadamard product is also free as a monoid 
(provided the other Hopf monoid is connected).
We introduce an operation on positive species which
allows us to describe a basis for the Hadamard
product of two free monoids in terms of bases of the factors (Theorem~\ref{t:had-free-monoid}).

\subsection{The Hadamard product of Hopf monoids}\label{ss:hadamard}

The \emph{Hadamard product} of two species $\tp$ and $\tq$ is the species
$\tp\times\tq$ defined on a finite set $I$ by
\[
(\tp\times\tq)[I] := \tp[I]\otimes\tq[I],
\]
and on bijections similarly. 

If $\tp$ and $\tq$ are connected, then so is $\tp\times\tq$.

\begin{proposition}\label{p:hadamard}
Let $p,q\in\Kb$ be arbitrary scalars.
If $\thh$ is a $p$-bimonoid and $\tk$ is a $q$-bimonoid,
then $\thh\times\tk$ is a $pq$-bimonoid.
\end{proposition}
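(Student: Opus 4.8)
The plan is to construct the bimonoid structure on $\thh\times\tk$ explicitly from those of the factors and then verify the axioms, paying particular attention to where the product of the parameters $pq$ enters. First I would define the structure maps. For the product, given a decomposition $I=S\sqcup T$, set
\[
\mu_{S,T}\colon (\thh\times\tk)[S]\otimes(\thh\times\tk)[T]
=\thh[S]\otimes\tk[S]\otimes\thh[T]\otimes\tk[T]\longrightarrow\thh[I]\otimes\tk[I],
\]
obtained by first applying the middle interchange (swapping $\tk[S]$ past $\thh[T]$ with no sign or scalar, since this is the interchange in $\Vect$, not the braiding in $\Ss$) and then $\mu^{\thh}_{S,T}\otimes\mu^{\tk}_{S,T}$. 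Dually, $\Delta_{S,T}$ is $\Delta^{\thh}_{S,T}\otimes\Delta^{\tk}_{S,T}$ followed by the middle interchange. The unit and counit are the evident maps $\Kb\cong\Kb\otimes\Kb\to\thh[\emptyset]\otimes\tk[\emptyset]$ and back. That $(\thh\times\tk,\mu,\iota)$ is a monoid and $(\thh\times\tk,\Delta,\epsilon)$ a comonoid follows because each axiom is the tensor (componentwise) version of the corresponding axiom for $\thh$ and for $\tk$, together with naturality and coherence of the interchange in $\Vect$; I would state this and not belabor it.

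Next I would verify the $pq$-compatibility axiom~\eqref{e:compr}. Fix two decompositions $I=S\sqcup T=S'\sqcup T'$ and write $A=S\cap S'$, $B=S\cap T'$, $C=T\cap S'$, $D=T\cap T'$. Chasing an element $(h_S\otimes k_S)\otimes(h_T\otimes k_T)$ of $(\thh\times\tk)[S]\otimes(\thh\times\tk)[T]$ around the diagram, the key observation is that the only place a scalar is produced is in the instance of the braiding $\beta_{pq}$ appearing in the top arrow of~\eqref{e:compr}, which on the relevant summand is $\id\otimes\beta_{pq}\otimes\id$ acting on the $B$- and $C$-components and contributing the factor $(pq)^{\abs{B}\abs{C}}$. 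On the other route, the compatibility axioms for $\thh$ and $\tk$ separately produce, in the $\thh$-factor, the scalar $p^{\abs{B}\abs{C}}$ coming from $\beta_p$ on $\thh[B]\otimes\thh[C]$, and in the $\tk$-factor the scalar $q^{\abs{B}\abs{C}}$ coming from $\beta_q$ on $\tk[B]\otimes\tk[C]$; the product of these is $(pq)^{\abs{B}\abs{C}}$, matching the other side. Everything else — the rearrangement of the eight tensor factors $\thh[A],\tk[A],\thh[B],\tk[B],\thh[C],\tk[C],\thh[D],\tk[D]$ — is governed by the interchange in $\Vect$, which is a symmetry carrying no scalar, so the two ways of reshuffling agree by coherence. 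Hence the diagram commutes and $\thh\times\tk$ is a $pq$-bimonoid.

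The main obstacle, and the only real subtlety, is bookkeeping: one must carefully track the eight vector-space tensor factors through both routes of~\eqref{e:compr} and confirm that the permutations of factors induced by the two paths coincide, so that the comparison genuinely reduces to the scalar identity $p^{\abs{B}\abs{C}}\cdot q^{\abs{B}\abs{C}}=(pq)^{\abs{B}\abs{C}}$. I would organize this by first splitting each of $\thh[S]$, $\tk[S]$, etc., along the decompositions $S=A\sqcup B$, $T=C\sqcup D$, $S'=A\sqcup C$, $T'=B\sqcup D$ using the (co)products of $\thh$ and $\tk$ on the two sides, then noting that both composites, stripped of scalars, are the unique coherence isomorphism permuting the factors into the order dictated by the target $\thh[S']\otimes\tk[S']\otimes\thh[T']\otimes\tk[T']$ with internal splittings along $A,C$ and $B,D$. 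Invoking Mac Lane coherence for the symmetric monoidal category $\Vect$ finishes this step cleanly. I would also remark, as the paper does, that since a connected bimonoid is automatically a Hopf monoid, when $\thh$ and $\tk$ are moreover connected the Hadamard product $\thh\times\tk$ is a $pq$-Hopf monoid.
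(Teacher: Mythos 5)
Your proof is correct. The paper itself defers the proof to \cite[Corollary~9.6]{AguMah:2010}, and your direct verification is exactly the standard argument behind that reference: the structure maps are the componentwise ones glued by the interchange in $\Vect$, and the compatibility diagram~\eqref{e:compr} for $\thh\times\tk$ reduces, after Mac Lane coherence handles the permutation of the eight tensor factors, to the scalar identity $p^{\abs{B}\abs{C}}\,q^{\abs{B}\abs{C}}=(pq)^{\abs{B}\abs{C}}$, which is precisely the point you isolate.
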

The proof is given in~\cite[Corollary~9.6]{AguMah:2010}.
The corresponding statement for Hopf monoids holds as well.

The product of $\thh\times\tk$ is defined by
\[
\xymatrix@C+11pt{
(\thh\times\tk)[S]\otimes(\thh\times\tk)[T] \ar@{.>}[rr]^-{\mu_{S,T}} \ar@{=}[d] & & 
(\thh\times\tk)[I] \ar@{=}[d]\\
(\thh[S]\otimes\tk[S])\otimes(\thh[T]\otimes\tk[T]) \ar[r]_-{\cong} & 
(\thh[S]\otimes\thh[T])\otimes(\tk[S]\otimes\tk[T]) \ar[r]_-{\mu_{S,T}\otimes\mu_{S,T}} &
\thh[I] \otimes\tk[I] 
}
\]
where the first map on the bottom simply switches the middle tensor factors.
The coproduct is defined similarly.

In particular, if $\thh$ and $\tk$ are bimonoids ($p=q=1$), then so is $\thh\times\tk$.

\begin{remark}
There is a parallel between the notions of species on the one hand,
and of graded vector spaces on the other. This extends to a parallel between
Hopf monoids in species and graded Hopf algebras. These topics are
studied in detail in~\cite[Part III]{AguMah:2010}.

The Hadamard product of graded vector spaces can be defined, but does not enjoy
the same formal properties of that for species. In particular, the Hadamard product
of two graded bialgebras carries natural algebra and coalgebra structures, but these
are not compatible in general; see~\cite[Remark~8.65]{AguMah:2010}.
For this reason, our main result (Theorem~\ref{t:freeness} below) does not
possess an analogue for graded bialgebras.
\end{remark}

\subsection{Freeness under Hadamard products}\label{ss:freeness}

The following is our main result. Let $p$ and $q\in\Kb$ be arbitrary scalars.

\begin{theorem}\label{t:freeness}
Let $\thh$ be a connected $p$-Hopf monoid. 
Let $\tk$ be a $q$-Hopf monoid that is free as a monoid.
Then $\thh\times\tk$ is a connected $pq$-Hopf monoid that is free as a monoid.
\end{theorem}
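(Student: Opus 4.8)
The plan is to realize $\thh\times\tk$ as a connected $0$-Hopf monoid after a suitable deformation, and then invoke Theorem~\ref{t:0free} (Loday--Ronco rigidity). First I would reduce to the case $q=0$. By hypothesis $\tk$ is free as a monoid, so there is a positive species $\tr$ with an isomorphism of monoids $\tk\cong\Tc(\tr)$. The free monoid $\Tc(\tr)$ underlies the whole family of $q'$-Hopf monoids $\Tc_{q'}(\tr)$ for any scalar $q'$; crucially, as noted in Section~\ref{ss:freeHopf}, \emph{the monoid structure is independent of $q'$}, and only the coproduct deforms. The key observation is that freeness of $\thh\times\tk$ as a monoid depends only on the monoid structure, hence only on the \emph{underlying species} together with its product. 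So it suffices to exhibit, for \emph{some} convenient comonoid structure making the product into a Hopf monoid, an isomorphism with a $0$-Hopf monoid $\Tc_0(\tq)$.

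Concretely, I would argue as follows. Replace $\tk=\Tc_q(\tr)$ by $\tk_0:=\Tc_0(\tr)$: this has the same underlying species and the same product as $\tk$, so $\thh\times\tk$ and $\thh\times\tk_0$ have the same underlying species and — by the explicit formula for the product of a Hadamard product in Section~\ref{ss:hadamard}, which only uses the products of the two factors — the same product. Thus $\thh\times\tk$ is free as a monoid if and only if $\thh\times\tk_0$ is. Now apply Proposition~\ref{p:hadamard} with $p$ the parameter of $\thh$ and $0$ the parameter of $\tk_0$: since $\thh$ is a connected $p$-Hopf monoid and $\tk_0=\Tc_0(\tr)$ is a connected $0$-Hopf monoid, the Hadamard product $\thh\times\tk_0$ is a $p\cdot 0 = 0$-Hopf monoid, and it is connected because both factors are. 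By Theorem~\ref{t:0free}, $\thh\times\tk_0\cong\Tc_0(\tq)$ for some positive species $\tq$ (its primitive part), so in particular it is free as a monoid. Transporting back, $\thh\times\tk$ is free as a monoid, and it is connected since $\thh$ and $\tk$ are. This also determines the parameter: $\thh\times\tk$ is a $pq$-Hopf monoid by Proposition~\ref{p:hadamard} applied directly to $\thh$ and $\tk$.

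The main obstacle — and the point that needs to be stated carefully rather than waved through — is the claim that the property ``free as a monoid'' is insensitive to replacing the comonoid structure. This is really the assertion that the forgetful functor from $q$-Hopf monoids to monoids in $(\Ss,\bdot)$ sends $\Tc_q(\tr)$ and $\Tc_0(\tr)$ to the \emph{same} object, which is immediate from Section~\ref{ss:freeHopf}, combined with the fact that the product of $\thh\times\tk$ in Section~\ref{ss:hadamard} is built solely from the products of $\thh$ and $\tk$. Once this bookkeeping is in place, the proof is just the chain: free factor $\Rightarrow$ deform its $q$ to $0$ without changing the product $\Rightarrow$ Hadamard product is a connected $0$-Hopf monoid $\Rightarrow$ Loday--Ronco rigidity $\Rightarrow$ free. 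One should also remark that no choice of basis $\tr$ of $\tk$ is canonical, but the conclusion does not depend on it; and that the resulting basis $\tq$ of $\thh\times\tk$ is the primitive part of the deformed Hopf monoid $\thh\times\tk_0$, not of $\thh\times\tk$ itself — a subtlety worth flagging for the reader, since the genuinely explicit description of a basis is deferred to Theorem~\ref{t:had-free-monoid}.
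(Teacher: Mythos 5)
Your proposal is correct and follows essentially the same route as the paper: exploit the fact that the monoid structure of $\Tc_q(\tr)$ is independent of $q$ to replace $\tk$ by $\Tc_0(\tr)$ without changing the product of the Hadamard product, apply Proposition~\ref{p:hadamard} to get a connected $0$-Hopf monoid, and conclude freeness from Theorem~\ref{t:0free}. The paper's proof is just a terser version of this same chain of reductions (and your closing remark about the basis being the primitive part of the deformed object $\thh\times\Tc_0(\tr)$ is a correct and worthwhile clarification).
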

\begin{proof}
Since $\thh$ and $\tk$ are connected (the latter by freeness), so is $\thh\times\tk$.
We then know from Proposition~\ref{p:hadamard} that $\thh\times\tk$ is a connected $pq$-Hopf monoid. Now, as monoids, we have
\[
\tk\cong \Tc_q(\tq) = \Tc_0(\tq)
\]
for some positive species $\tq$. Hence, as monoids,
\[
\thh\times\tk \cong \thh \times \Tc_0(\tq).
\]
But the latter is a $0$-Hopf monoid by Proposition~\ref{p:hadamard}, and hence
free as a monoid by Theorem~\ref{t:0free}.
\end{proof}

\begin{corollary}\label{c:freeness}
Let $\thh$ be a connected $p$-Hopf monoid. Then $\thh\times\wL_q$ is free as a monoid.
\end{corollary}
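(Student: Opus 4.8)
The plan is to obtain Corollary~\ref{c:freeness} as an immediate special case of Theorem~\ref{t:freeness}. The key observation is that the Hopf monoid of linear orders $\wL_q$ is free as a monoid: indeed, in Section~\ref{ss:free-linear} we exhibited a canonical isomorphism of $q$-Hopf monoids $\Tcq(\wX)\cong\wL_q$, and in particular an isomorphism of monoids $\Tc(\wX)\cong\wL_q$, where $\wX$ is the positive species supported on singletons. Thus $\wL_q$ is the free monoid on $\wX$.

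First I would record that $\wL_q$ is a $q$-Hopf monoid that is free as a monoid. Then I would apply Theorem~\ref{t:freeness} with $\tk=\wL_q$: since $\thh$ is a connected $p$-Hopf monoid and $\wL_q$ is a $q$-Hopf monoid free as a monoid, the theorem gives directly that $\thh\times\wL_q$ is a connected $pq$-Hopf monoid that is free as a monoid. This proves the corollary.

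There is essentially no obstacle here: the entire content has been isolated in Theorem~\ref{t:freeness} and in the freeness of $\wL_q$ established in Section~\ref{ss:free-linear}. The only thing worth spelling out is the identification $\Tc(\wX)\cong\wL_q$ as monoids, which follows from~\eqref{e:XF} together with the fact that set compositions of $I$ into singletons are exactly the linear orders on $I$, so that $\Tc(\wX)[I]$ and $\wL_q[I]$ have matching bases and matching products (concatenation). With that in hand the proof is a single invocation of Theorem~\ref{t:freeness}.

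\begin{proof}
As observed in Section~\ref{ss:free-linear}, there is an isomorphism of monoids $\wL_q\cong\Tc(\wX)$; in particular, $\wL_q$ is a $q$-Hopf monoid that is free as a monoid. Since $\thh$ is a connected $p$-Hopf monoid, Theorem~\ref{t:freeness} (applied with $\tk=\wL_q$) shows that $\thh\times\wL_q$ is a connected $pq$-Hopf monoid that is free as a monoid.
\end{proof}
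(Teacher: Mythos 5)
Your proposal is correct and coincides with the paper's own proof: both deduce the corollary as an immediate special case of Theorem~\ref{t:freeness}, using the identification $\wL_q\cong\Tcq(\wX)$ from Section~\ref{ss:free-linear} to see that $\wL_q$ is free as a monoid. No gaps.
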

\begin{proof}
This is a special case of Theorem~\ref{t:freeness}, since as discussed in Section~\ref{ss:free-linear}, $\wL_q\cong\Tc_q(\wX)$.
\end{proof}


\begin{example}\label{eg:LL}
The Hopf monoid $\tLL_q$ of \emph{pairs of linear orders} is studied in~\cite[Section~12.3]{AguMah:2010}. There is an isomorphism of $q$-Hopf monoids
\[
 \tLL_q \cong \wL^{*} \times \wL_q.
 \]
Corollary~\ref{c:freeness} implies that $\tLL_q$ is free as a monoid.
This result was obtained by different means in~\cite[Section~12.3]{AguMah:2010}.
It implies the fact that the Hopf algebra of permutations of Malvenuto
and Reutenauer~\cite{MalReu:1995} is free as an algebra, a result known from~\cite{PoiReu:1995}. See Section~\ref{ss:livernet} below for more comments regarding 
connections between Hopf monoids and Hopf algebras.
\end{example}

\begin{example}\label{eg:super}
The Hopf monoid $\SC(\Un)$ of \emph{superclass functions} 
on unitriangular matrices with entries in $\Fb_2$ is studied in~\cite{ABT:2011}.
There is an isomorphism of Hopf monoids
\[
\SC(\Un) \cong \tPi\times\wL,
\]
where $\tPi$ is the Hopf monoid of \emph{set partitions} of~\cite[Section~12.6]{AguMah:2010}. It follows that $\SC(\Un)$ is free as a monoid. This result was obtained
by different means in~\cite[Proposition~17]{ABT:2011}. It implies the fact that
the Hopf algebra of \emph{symmetric functions in noncommuting variables}~\cite{RosSag:2006} is free
as an algebra, a result known from~\cite{Wol:1936}.
\end{example}

\subsection{Livernet freeness for certain Hopf algebras}\label{ss:livernet}

It is possible to associate a number of graded Hopf algebras to
a given Hopf monoid $\thh$. This is the subject of~\cite[Chapter~15]{AguMah:2010}.
In particular, there are two graded Hopf algebras $\Kc(\thh)$
and $\Kcb(\thh)$ related by a canonical surjective morphism
\[
\Kc(\thh) \onto \Kcb(\thh).
\]
Moreover, for any Hopf monoid $\thh$, there is a canonical isomorphism of graded Hopf algebras~\cite[Theorem~15.13]{AguMah:2010}
\[
\Kcb(\wL\times\thh) \cong \Kc(\thh).
\]

The functor $\Kcb$ preserves a number of properties, including freeness:
if $\thh$ is free as a monoid, then $\Kcb(\thh)$ is free as an algebra~\cite[Proposition~18.7]{AguMah:2010}.

Combining these remarks with Corollary~\ref{c:freeness} we deduce that for any
connected Hopf monoid $\thh$, the algebra $\Kc(\thh)$ is free. This result is
due to Livernet~\cite[Theorem~4.2.2]{Liv:2010}. A proof similar to the one above
is given in~\cite[Section~16.1.7]{AguMah:2010}.

As an example, for $\thh=\tLL$ we obtain that the Hopf algebra of pairs of permutations
is free as an algebra, a result known from~\cite[Theorem 7.5.4]{AguMah:2006}.


\subsection{The Hadamard product of free monoids}\label{ss:had-free}

Given positive species $\tp$ and $\tq$,
define a new positive species $\tp \star \tq$ by
\begin{equation}\label{e:star}
(\tp \star \tq)[I] :=
\bigoplus_{\substack{F,G\vDash I\\F\wedge G=(I)}} \tp(F)\otimes\tq(G).
\end{equation}
The sum is over all pairs $(F,G)$ of compositions of $I$
such that $F\wedge G=(I)$. We are employing notation~\eqref{e:sp-comp}.

\begin{lemma}\label{l:star-prop}
For any composition $H\vDash I$, there is a canonical isomorphism of vector spaces
\begin{equation}\label{e:star-prop}
(\tp \star \tq)(H) \cong \bigoplus_{\substack{F,G\vDash I\\F\wedge G=H}}\tp(F)\otimes\tq(G)
\end{equation}
given by rearrangement of the tensor factors.
\end{lemma}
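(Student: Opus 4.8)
The plan is to unwind the definition of $(\tp\star\tq)(H)$ using the formula $\eqref{e:sp-comp}$ together with the defining equation $\eqref{e:star}$, and then reorganize the resulting direct sum by grouping summands according to how the ambient compositions $F$ and $G$ restrict to the blocks of $H$. Write $H=(I_1,\dots,I_m)$. By $\eqref{e:sp-comp}$ applied to the species $\tp\star\tq$ we have
\[
(\tp\star\tq)(H) = (\tp\star\tq)[I_1]\otimes\cdots\otimes(\tp\star\tq)[I_m],
\]
and expanding each tensor factor via $\eqref{e:star}$ gives a direct sum over tuples $\bigl((F_1,G_1),\dots,(F_m,G_m)\bigr)$ with $F_\ell,G_\ell\vDash I_\ell$ and $F_\ell\wedge G_\ell=(I_\ell)$ for each $\ell$, of the spaces $\bigotimes_{\ell=1}^m \bigl(\tp(F_\ell)\otimes\tq(G_\ell)\bigr)$.

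Next I would set up the indexing bijection. To a tuple $(F_\ell,G_\ell)_\ell$ as above, associate the concatenations $F:=F_1\cdots F_m$ and $G:=G_1\cdots G_m$, which are compositions of $I$ refining $H$ and satisfying $F|_{I_\ell}=F_\ell$, $G|_{I_\ell}=G_\ell$. Conversely, any $F,G\vDash I$ with $F,G\ge H$ arise this way by restricting to the blocks of $H$. The key point is that under this correspondence the condition ``$F_\ell\wedge G_\ell=(I_\ell)$ for all $\ell$'' is equivalent to ``$F\wedge G=H$'': this follows from the interaction of meet with concatenation recorded in $\eqref{e:meet-conc-set}$, iterated over the $m$ blocks, giving $F\wedge G=(F_1\wedge G_1)\cdots(F_m\wedge G_m)$, and this equals $H=(I_1)\cdots(I_m)$ exactly when each factor $F_\ell\wedge G_\ell$ equals $(I_\ell)$. (One also needs that $F\wedge G\ge H$ forces both $F$ and $G$ to be $\ge H$, which is immediate since $F\wedge G\le F$ and $F\wedge G\le G$; so the sum on the right of $\eqref{e:star-prop}$ is indeed indexed by exactly the tuples described.) Finally, for each matching pair of index data, the canonical isomorphism $\tq(F)\cong\tq(F|_S)\otimes\tq(F|_T)$ from Section~\ref{ss:free} (applied repeatedly, or directly to the block decomposition of $H$) identifies $\bigotimes_\ell\bigl(\tp(F_\ell)\otimes\tq(G_\ell)\bigr)$ with $\tp(F)\otimes\tq(G)$ after permuting tensor factors so that all the $\tp$-factors precede all the $\tq$-factors; taking the direct sum over all index data yields $\eqref{e:star-prop}$.

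The routine part is the bookkeeping of tensor factors and the verification that the rearrangement isomorphism is natural and independent of choices; I would state it as ``rearrangement of the tensor factors'' without belaboring it, exactly as the lemma statement does. The one genuine step is the equivalence of the meet conditions, and the only subtlety there is that $\eqref{e:meet-conc-set}$ is stated for a two-fold concatenation along a single decomposition $I=S\sqcup T$; applying it $m-1$ times to peel off blocks of $H$ one at a time is straightforward but should be spelled out, since this is where the hypothesis $F\wedge G=(I)$ in the definition of $\star$ is precisely what makes the local-to-global passage work. I expect no real obstacle beyond this; the proof is essentially a reindexing argument.
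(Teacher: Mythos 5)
Your argument is correct and is essentially the paper's proof: the paper phrases the reduction by observing that both sides of \eqref{e:star-prop} are ``multiplicative'' under concatenation of set compositions and agree when $H=(I)$ by the definition of $\star$, whereas you unroll the same reduction explicitly over the $m$ blocks of $H$, invoking \eqref{e:meet-conc-set} and the unique factorization of compositions refining $H$ in exactly the same way. There is no gap.
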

\begin{proof}
Let us say that a function $f$ on set compositions with values on vector spaces is
\emph{multiplicative} if $f(H_1\cdot H_2) \cong f(H_1)\otimes f(H_2)$ for all
$H_1\vDash I_1$, $H_2\vDash I_2$, $I=I_1\sqcup I_2$. Such functions are uniquely determined
by their values on the compositions of the form $(I)$. The isomorphism~\eqref{e:star-prop} holds when $H=(I)$ by definition~\eqref{e:star}. It thus suffices to check that both
sides are multiplicative.

The left hand side of~\eqref{e:star-prop} is multiplicative in view of~\eqref{e:sp-comp}. 

If, for $i=1,2$, $F_i,G_i\vDash I_i$ are such that $F_i\wedge G_i=H_i$, then
\[
(F_1\cdot F_2)\wedge(G_1\cdot G_2) = H_1\cdot H_2
\]
by~\eqref{e:meet-conc-set}. Moreover, if $F,G\vDash I_1\sqcup I_2$ are such that 
$F\wedge G=H_1\cdot H_2$, then $F=F_1\cdot F_2$ and $G=G_1\cdot G_2$ for
unique $F_i,G_i$ as above.
This implies the multiplicativity of the right hand side.
\end{proof}

We show that the operation~\eqref{e:star} is associative.

\begin{proposition}\label{p:star-asso}
For any positive species $\tp$, $\tq$ and $\tr$,
there is a canonical isomorphism
\begin{equation}
(\tp \star \tq) \star \tr \iso \tp \star (\tq \star \tr).
\end{equation}
\end{proposition}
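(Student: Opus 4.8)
The plan is to unwind both sides of the claimed isomorphism, using Lemma~\ref{l:star-prop} to compute each $\star$-operation as an iterated meet, and then to match the two triple-meet descriptions via associativity of $\wedge$ in the meet-semilattice of set compositions. Concretely, I would first compute
\[
\bigl((\tp\star\tq)\star\tr\bigr)[I] = \bigoplus_{\substack{H,G\vDash I\\ H\wedge G=(I)}} (\tp\star\tq)(H)\otimes\tr(G),
\]
and then expand the inner factor $(\tp\star\tq)(H)$ using~\eqref{e:star-prop} with the composition $H$ in the role of the auxiliary composition there:
\[
(\tp\star\tq)(H) \cong \bigoplus_{\substack{F,E\vDash I\\ F\wedge E=H}} \tp(F)\otimes\tq(E).
\]
Substituting, and using that $F\wedge E=H$ together with $H\wedge G=(I)$ is equivalent to the single condition $(F\wedge E)\wedge G=(I)$, i.e.\ $F\wedge E\wedge G=(I)$ (and that for each such triple $(F,E,G)$ the composition $H:=F\wedge E$ is uniquely determined), I would obtain
\[
\bigl((\tp\star\tq)\star\tr\bigr)[I] \cong \bigoplus_{\substack{F,E,G\vDash I\\ F\wedge E\wedge G=(I)}} \tp(F)\otimes\tq(E)\otimes\tr(G).
\]

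The symmetric computation for $\tp\star(\tq\star\tr)$ yields the same right-hand side: expanding the outer sum over pairs $(F,K)$ with $F\wedge K=(I)$, then expanding $(\tq\star\tr)(K)$ via~\eqref{e:star-prop} into a sum over $E\wedge G=K$, and using associativity of $\wedge$ once more gives $\bigoplus_{F\wedge E\wedge G=(I)} \tp(F)\otimes\tq(E)\otimes\tr(G)$. Composing the two chains of canonical isomorphisms produces the desired isomorphism $(\tp\star\tq)\star\tr\iso\tp\star(\tq\star\tr)$. All the maps involved are rearrangements of tensor factors, so the resulting isomorphism is canonical, as claimed; one should also remark briefly that it is natural in $\tp,\tq,\tr$ since each constituent rearrangement is.

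The main point requiring care — and the step I expect to be the only real obstacle — is the justification that Lemma~\ref{l:star-prop} may legitimately be applied to the \emph{internal} composition $H$ appearing inside a direct sum indexed by $H$ itself, and that the resulting double-indexed sum can be reindexed by the triple $(F,E,G)$ with the single constraint $F\wedge E\wedge G=(I)$. This is where I would be most careful: I must check that the correspondence $(H,G,F,E)\mapsto(F,E,G)$, with $H$ recovered as $F\wedge E$, is a bijection between the two index sets, which amounts to the elementary fact that in the meet-semilattice of set compositions, specifying $F,E,G$ with $F\wedge E\wedge G=(I)$ is the same as specifying $F,E$ with $G$ such that $(F\wedge E)\wedge G=(I)$. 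The associativity of $\wedge$ makes this immediate, and the same bijection argument, applied on the other side, shows both iterated constructions collapse to the manifestly $\tp,\tq,\tr$-symmetric object above. The only bookkeeping is to confirm that the two rearrangement isomorphisms are genuinely the inverse/composite one expects — this is routine, so I would state it and move on.
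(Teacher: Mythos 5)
Your argument is correct and follows essentially the same route as the paper's proof: both expand one side via Lemma~\ref{l:star-prop} applied to the inner composition, reindex the resulting double sum as a single sum over triples with $F\wedge E\wedge G=(I)$, and identify both iterated products with this symmetric triple-meet object. The paper writes out the expansion of $\tp\star(\tq\star\tr)$ and notes the other side is similar, whereas you expand $(\tp\star\tq)\star\tr$ first, but this is immaterial.
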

\begin{proof} Define
\[
(\tp \star \tq \star \tr)[I] := \bigoplus_{\substack{F,G,H\vDash I,\\F\wedge G\wedge H=(I)}} \tp(F)\otimes\tq(G)\otimes\tr(H).
\]
We make use of the isomorphism~\eqref{e:star-prop} to build the following.
\begin{align*}
\bigl(\tp \star (\tq \star \tr)\bigr)[I] & =
\bigoplus_{\substack{F,K\vDash I\\ F\wedge K=(I)}} \tp(F)\otimes (\tq\star \tr)(K)\\
& \cong \bigoplus_{\substack{F,K\vDash I,\\F\wedge K=(I)}}\bigoplus_{\substack{G,H\vDash I,\\G\wedge H=K}} \tp(F)\otimes\tq(G)\otimes\tr(H)\\
& = \bigoplus_{\substack{F,G,H\vDash I,\\F\wedge G\wedge H=(I)}} \tp(F)\otimes\tq(G)\otimes\tr(H) = (\tp \star \tq \star \tr)[I]
\end{align*}
The space $\bigl((\tp \star \tq) \star \tr\bigr)[I]$ can be identified with
$(\tp \star \tq \star \tr)[I]$ in a similar manner.
\end{proof}

There is also an evident natural isomorphism
\begin{equation}
\tp \star \tq \iso \tq \star \tp.
\end{equation}
Thus, $\star$ defines a (nonunital) symmetric monoidal structure on the category of positive species.

Our present interest in the operation $\star$ stems from the following result, which
provides an explicit description for the basis of a Hadamard product of two free monoids
in terms of bases of the factors.

\begin{theorem}\label{t:had-free-monoid}
For any positive species $\tp$ and $\tq$,
there is a natural isomorphism of monoids
\begin{equation}\label{e:had-free-monoid}
\Tc(\tp \star \tq) \cong \Tc(\tp)\times\Tc(\tq).
\end{equation}
\end{theorem}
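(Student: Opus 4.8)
The plan is to use the universal property of the free monoid (Theorem~\ref{t:dmunivp}) to produce a morphism $\Tc(\tp\star\tq)\to\Tc(\tp)\times\Tc(\tq)$ and then check it is an isomorphism by a dimension-count at the level of each finite set $I$. First I would observe that $\Tc(\tp)\times\Tc(\tq)$ is a monoid (its product is the one described in Section~\ref{ss:hadamard}, namely the Hadamard product of the two concatenation products). So it suffices to exhibit a morphism of \emph{species} $\zeta\colon \tp\star\tq\to\Tc(\tp)\times\Tc(\tq)$; the universal property then extends it uniquely to a monoid map $\hat\zeta\colon\Tc(\tp\star\tq)\to\Tc(\tp)\times\Tc(\tq)$. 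For the morphism $\zeta$, recall that $(\tp\star\tq)[I]=\bigoplus_{F\wedge G=(I)}\tp(F)\otimes\tq(G)$, and $\bigl(\Tc(\tp)\times\Tc(\tq)\bigr)[I]=\Tc(\tp)[I]\otimes\Tc(\tq)[I]=\bigl(\bigoplus_{F\vDash I}\tp(F)\bigr)\otimes\bigl(\bigoplus_{G\vDash I}\tq(G)\bigr)$; so there is an obvious inclusion of the summands indexed by pairs with $F\wedge G=(I)$ into the full direct sum, and that is $\zeta_I$ (zero on $\emptyset$, as required for a map out of a positive species).

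Next I would unwind what $\hat\zeta$ does on a general summand. By the formula for $\hat\zeta$ recalled after Theorem~\ref{t:dmunivp}, on the summand $(\tp\star\tq)(H)$ of $\Tc(\tp\star\tq)[I]$ indexed by a composition $H=(I_1,\dots,I_k)\vDash I$, the map $\hat\zeta$ is $\zeta_{I_1}\otimes\cdots\otimes\zeta_{I_k}$ followed by the iterated product $\mu_{I_1,\dots,I_k}$ of $\Tc(\tp)\times\Tc(\tq)$. Using Lemma~\ref{l:star-prop}, $(\tp\star\tq)(H)\cong\bigoplus_{F\wedge G=H}\tp(F)\otimes\tq(G)$, so the domain of $\hat\zeta$ is $\bigoplus_{H\vDash I}\bigoplus_{F\wedge G=H}\tp(F)\otimes\tq(G)\cong\bigoplus_{F,G\vDash I}\tp(F)\otimes\tq(G)$, since every pair $(F,G)$ has a well-defined meet $F\wedge G=H$. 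The iterated Hadamard-product map $\mu_{I_1,\dots,I_k}$ concatenates the $\tp$-factors and the $\tq$-factors separately; so the summand indexed by $(F,G)$ with $F\wedge G=H$ lands in $\tp(\tilde F)\otimes\tq(\tilde G)$ where $\tilde F$ is the concatenation of the blocks of the $F|_{I_i}$ and similarly for $\tilde G$. I would then verify—this is the combinatorial heart of the argument—that if $F\wedge G=H=(I_1,\dots,I_k)$, then $F$ refines $H$ (each block of $F$ sits inside some $I_i$) and $G$ refines $H$, so that in fact $\tilde F=F$ and $\tilde G=G$. This uses the description of the meet as intersection of flags / that $F,G\le F\wedge G$ is false but $F\wedge G\le F$ and $F\wedge G\le G$: refinement means $H\le F$ and $H\le G$, i.e.\ $F$ and $G$ each refine $H$, which is exactly what is needed. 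Hence $\hat\zeta$ restricted to the $(F,G)$-summand is the identity onto the $(F,G)$-summand of $\bigl(\Tc(\tp)\times\Tc(\tq)\bigr)[I]=\bigoplus_{F,G\vDash I}\tp(F)\otimes\tq(G)$.

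Consequently $\hat\zeta_I$ is, under the identifications above, the identity map of $\bigoplus_{F,G\vDash I}\tp(F)\otimes\tq(G)$, hence an isomorphism for every $I$; so $\hat\zeta$ is an isomorphism of species, and being a morphism of monoids, an isomorphism of monoids. This proves~\eqref{e:had-free-monoid}, and naturality in $\tp$ and $\tq$ is clear since every map in sight was built canonically.

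The main obstacle I anticipate is the bookkeeping in the middle step: tracking how the iterated product $\mu_{I_1,\dots,I_k}$ of $\Tc(\tp)\times\Tc(\tq)$ acts through the isomorphism of Lemma~\ref{l:star-prop}, and confirming that concatenating the pieces $F|_{I_i}$ over the blocks of $H=F\wedge G$ recovers $F$ exactly (and likewise $G$). This is precisely the content that makes the meet condition $F\wedge G=(I)$ in the definition of $\star$ the ``right'' one: it is a block-by-block (multiplicative) condition, matching the way the free monoid is assembled from its generators. Once that compatibility is checked—most cleanly via the multiplicativity argument already used in Lemma~\ref{l:star-prop}, i.e.\ reducing to the case $H=(I)$ where there is nothing to concatenate—the rest is formal.
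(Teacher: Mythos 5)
Your proof is correct and rests on exactly the same two ingredients as the paper's: the decomposition of $(\tp\star\tq)(H)$ from Lemma~\ref{l:star-prop} and the compatibility of meet with concatenation in~\eqref{e:meet-conc-set}. The only cosmetic difference is that you obtain multiplicativity for free from the universal property (Theorem~\ref{t:dmunivp}) and then check bijectivity summand by summand, whereas the paper writes down the species isomorphism directly and cites~\eqref{e:meet-conc-set} for preservation of products; the underlying map is the same.
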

\begin{proof}
We calculate using~\eqref{e:star-prop}.
\begin{multline*}
\Tc(\tp \star \tq)[I] =
\bigoplus_{H\vDash I} (\tp \star \tq)(H)  \cong
\bigoplus_{H\vDash I} \bigoplus_{\substack{F,G\vDash I\\F\wedge G=H}} \tp(F)\otimes\tq(G)\\
= \bigoplus_{F,G\vDash I} \tp(F)\otimes\tq(G)
= \Tc(\tp)[I]\otimes\Tc(\tq)[I] =
\bigl(\Tc(\tp)\times\Tc(\tq)\bigr)[I].
\end{multline*}
The fact that this isomorphism preserves products follows from~\eqref{e:meet-conc-set}. 
\end{proof}

\begin{example}\label{eg:basisLL}
Since $\wX$ is a basis for $\wL$,
\eqref{e:had-free-monoid} implies that $\wX\star\wX$ is a basis for $\wL\times\wL$.
From~\eqref{e:XF} we obtain that
\[
\{(C,D) \mid C\wedge D = (I)\}.
\]
is a linear basis for $(\wX\star\wX)[I]$. (The linear orders $C$ and $D$ are viewed as set compositions into singletons.)
For related results, see~\cite[Section 12.3.6]{AguMah:2010}.
\end{example}

Recall that, for each scalar $q\in\Kb$,
any free monoid $\Tc(\tp)$ is endowed with a canonical
comonoid structure and the resulting $q$-Hopf monoid is denoted $\Tcq(\tp)$ 
(Section~\ref{ss:freeHopf}).
It turns out that, when $q=0$,~\eqref{e:had-free-monoid} is in fact an isomorphism
of $0$-Hopf monoids, as we now prove. The proof below also shows that~\eqref{e:had-free-monoid} is not an isomorphism of comonoids for $q\neq 0$.

\begin{proposition}\label{p:had-free-monoid}
The map~\eqref{e:had-free-monoid}
is an isomorphism of $0$-Hopf monoids
\[
\Tc_0(\tp \star \tq) \cong \Tc_0(\tp)\times\Tc_0(\tq).
\]
\end{proposition}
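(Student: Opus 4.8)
The plan is to check that the monoid isomorphism~\eqref{e:had-free-monoid} already constructed in Theorem~\ref{t:had-free-monoid} intertwines the coproducts of $\Tc_0(\tp\star\tq)$ and $\Tc_0(\tp)\times\Tc_0(\tq)$ when $q=0$. Since both sides are connected and the map is already an isomorphism of monoids, once we know it commutes with the coproducts it is automatically an isomorphism of $0$-bimonoids, and hence of $0$-Hopf monoids by the remarks in Section~\ref{ss:con-hopf}. So the entire content is a comparison of the two coproducts under the identification
\[
\Tc_0(\tp\star\tq)[I] \;\cong\; \bigoplus_{F,G\vDash I} \tp(F)\otimes\tq(G) \;\cong\; \bigl(\Tc_0(\tp)\times\Tc_0(\tq)\bigr)[I]
\]
of Theorem~\ref{t:had-free-monoid}, where the first isomorphism uses Lemma~\ref{l:star-prop}.

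First I would fix a decomposition $I=S\sqcup T$ with $S,T$ nonempty (the degenerate cases being trivial by connectedness) and write down both coproducts explicitly on a summand $\tp(F)\otimes\tq(G)$ indexed by a pair $F,G\vDash I$. On the right-hand side, the coproduct of a Hadamard product of $0$-Hopf monoids is $(\id\otimes\beta_0\otimes\id)\circ(\Delta_{S,T}\otimes\Delta_{S,T})$ composed with the middle-swap, and by the description in Section~\ref{ss:0free} the coproduct of $\Tc_0$ sends the $F$-component to zero unless $F=F|_S\cdot F|_T$, i.e.\ unless $(S,T)\leq F$, in which case it produces the component indexed by $(F|_S,F|_T)$ with coefficient $1$; similarly for $G$. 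Thus on the right the $(F,G)$-summand survives precisely when $(S,T)\le F$ and $(S,T)\le G$. On the left-hand side, the $0$-coproduct of $\Tc_0(\tp\star\tq)$ sends the $H$-component to zero unless $(S,T)\le H$, where $H$ is the composition indexing the summand under the Lemma~\ref{l:star-prop} identification; but that summand corresponds to the pairs $(F,G)$ with $F\wedge G=H$. The key elementary fact to verify is then: $(S,T)\le F\wedge G$ if and only if $(S,T)\le F$ and $(S,T)\le G$ — which is immediate since $\le$ is the refinement order and $\wedge$ is the meet — and moreover, when this holds, $(F\wedge G)|_S = F|_S\wedge G|_S$ and $(F\wedge G)|_T = F|_T\wedge G|_T$, which follows from~\eqref{e:meet-conc-set} applied to $F=F|_S\cdot F|_T$ and $G=G|_S\cdot G|_T$. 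These two observations show the surviving summands match up on the nose, and that the target summand on the left, indexed by $H|_S$ on $S$ and $H|_T$ on $T$, is carried by the Lemma~\ref{l:star-prop} rearrangement precisely to the pair $((F|_S,G|_S),(F|_T,G|_T))$ appearing on the right. The coefficients are $1$ on both sides (this is the $q=0$ simplification), so the maps agree.

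The main obstacle I expect is bookkeeping rather than conceptual: one must be careful that the rearrangement-of-tensor-factors isomorphism of Lemma~\ref{l:star-prop} is compatible with the rearrangement used in Theorem~\ref{t:had-free-monoid} and with the middle-swap in the Hadamard coproduct, so that no hidden sign or permutation of factors spoils the match — but since every structural map in sight is a permutation of tensor factors (there are no scalars once $q=0$), this is a routine if slightly tedious verification. Finally, to justify the last sentence of the proposition — that~\eqref{e:had-free-monoid} is \emph{not} a comonoid isomorphism for $q\neq 0$ — I would exhibit the discrepancy in Schubert exponents: the left-hand coproduct of $\Tcq(\tp\star\tq)$ on the $H$-component carries a single factor $q^{\area_{S,T}(H)} = q^{\area_{S,T}(F\wedge G)}$, whereas the right-hand Hadamard coproduct carries $q^{\area_{S,T}(F)}\cdot q^{\area_{S,T}(G)}$ (one factor from each tensorand, the braiding $\beta_q$ contributing nothing extra because the $\Tc_q$ coproduct already puts the elements of $S$ before those of $T$). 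These exponents disagree in general, e.g.\ already for $\tp=\tq=\wX$ on a two-element set with $F\ne G$, where $\area$ of the meet $(I)$ is $0$ but $\area_{S,T}(F)+\area_{S,T}(G)=1$; so the isomorphism fails to respect coproducts once $q\neq0$.
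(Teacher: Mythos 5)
Your proof is correct, but it takes a genuinely different route from the paper's. You verify directly that the two coproducts agree on \emph{every} summand $\tp(F)\otimes\tq(G)$ of $\Tc_0(\tp\star\tq)[I]\cong\bigoplus_{F,G\vDash I}\tp(F)\otimes\tq(G)$, using the meet property ($(S,T)\le F\wedge G$ iff $(S,T)\le F$ and $(S,T)\le G$) together with $(F\wedge G)|_S=F|_S\wedge G|_S$ from~\eqref{e:meet-conc-set} to match the surviving components and the restriction data on both sides. The paper instead exploits the fact that the map is already a morphism of monoids, so by the compatibility axiom~\eqref{e:compr} it suffices to check the coproducts agree on the generating species $\tp\star\tq$ and its image; there the check collapses to showing both coproducts are \emph{zero} for $S,T\neq\emptyset$ (the left because generators live in the $(I)$-component, the right because $(S,T)\le F$ and $(S,T)\le G$ would force $(S,T)\le F\wedge G=(I)$). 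The paper's argument is shorter and avoids the tensor-factor bookkeeping entirely, at the cost of invoking the reduction-to-generators principle; yours is more self-contained and yields the full explicit formula for the coproduct on every component, which is more information than the proposition requires. One small caveat on your closing remark about $q\neq 0$: the coproduct of $\Tc_q(\tp\star\tq)$ on the $H$-component is not $q^{\area_{S,T}(H)}$ times a rearrangement unless $S$ is $H$-admissible --- in your own example with $H=(I)$ it is simply zero --- so the discrepancy for $q\neq 0$ is ``zero versus $q^{\area_{S,T}(F)+\area_{S,T}(G)}\neq 0$'' rather than a mismatch of two nonzero Schubert exponents; the example still establishes the failure, but the stated reason should be adjusted.
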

\begin{proof}
In order to prove that coproducts are preserved it  suffices to check that they
agree on the basis $\tp\star\tq$ of $\Tc(\tp \star \tq)$ and on its image in $\Tc(\tp)\times\Tc(\tq)$. The image of $(\tp \star \tq)[I]$ is the direct sum of the spaces $\tp(F)\otimes\tq(G)$ over those $F,G\vDash I$ such that $F\wedge G =(I)$.
Choose $S,T\neq\emptyset$ such that $I=S\sqcup T$. 
In view of the definition of the coproduct on a free monoid (Section~\ref{ss:freeHopf}),
the coproduct $\Delta_{S,T}$
of $\Tc_q(\tp \star \tq)$ is zero on $(\tp \star \tq)[I]$. (This holds for any $q\in\Kb$.)
On the other hand, from~\eqref{e:0free} we have that the coproduct of
 $\Tc_0(\tp)\times\Tc_0(\tq)$ on $\tp(F)\otimes\tq(G)$ is also zero, unless both
 \[
 (S,T)\leq F \qand (S,T)\leq G.
 \]
 Since this is forbidden by the assumption $F\wedge G =(I)$, the coproducts agree.
 \end{proof}


\section{The dimension sequence of a connected Hopf monoid}
\label{s:genfun}

We now derive a somewhat surprising application of Theorem~\ref{t:freeness}. 
It states that the reciprocal of the ordinary generating function of a connected Hopf monoid
has nonpositive (integer) coefficients (Theorem~\ref{t:ordi-hopf} below).
We compare this result with other previously known conditions satisfied by
the dimension sequence of a connected Hopf monoid.

\subsection{Coinvariants}\label{ss:coinvariants}

Let $G$ be a group and $V$ a $\Kb G$-module. The space of \emph{coinvariants} $V_{G}$ is
the quotient of $V$ by the $\Kb$-subspace spanned by the elements of the form
\[
v-g\cdot v
\]
for $v\in V$, $g\in G$. If $V$ is a free $\Kb G$-module, then 
\[
\dim_{\Kb} V_G = \rank_{\Kb G} V.
\]

Let $V$ and $W$ be $\Kb G$-modules. Let $U_1$ be the space $V\otimes W$
with \emph{diagonal} $G$-action:
\[
g\cdot (v\otimes w) := (g\cdot v)\otimes(g\cdot w).
\]
Let $U_2$ be the same space but with the following $G$-action:
\[
g\cdot (v\otimes w) := v\otimes (g\cdot w).
\]
The following is a well-known basic fact.

\begin{lemma}\label{l:diagonal}
If $W$ is free as a $\Kb G$-module, then $U_1\cong U_2$. In particular, 
\[
\dim_{\Kb} (U_1)_G = (\dim_{\Kb} V)(\dim_{\Kb} W_G).
\]
\end{lemma}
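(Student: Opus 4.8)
\textbf{Proof proposal for Lemma~\ref{l:diagonal}.}

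The plan is to exhibit an explicit $\Kb G$-linear isomorphism $U_1 \to U_2$ and then read off the dimension count from the first assertion of Section~\ref{ss:coinvariants} together with the multiplicativity of rank under free modules. First I would reduce to the case where $W=\Kb G$ is the regular representation: since $W$ is free as a $\Kb G$-module, $W\cong \bigoplus_{\alpha}\Kb G$ as $\Kb G$-modules, and the constructions $U_1$ and $U_2$ both commute with direct sums in the $W$-variable, so it suffices to produce the isomorphism when $W=\Kb G$. In that case $U_1 = V\otimes \Kb G$ with diagonal action and $U_2 = V\otimes \Kb G$ with action only on the right tensor factor.

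Next I would write down the classical ``untwisting'' map. Define $\varphi\colon V\otimes\Kb G \to V\otimes\Kb G$ on the spanning set by $\varphi(v\otimes g) := (g^{-1}\cdot v)\otimes g$, extended linearly. This is a linear bijection, with inverse $v\otimes g\mapsto (g\cdot v)\otimes g$. The key computation is that $\varphi$ intertwines the diagonal action on the source with the right-only action on the target: for $h\in G$,
\[
\varphi\bigl(h\cdot(v\otimes g)\bigr) = \varphi\bigl((h\cdot v)\otimes hg\bigr) = \bigl((hg)^{-1}h\cdot v\bigr)\otimes hg = (g^{-1}\cdot v)\otimes hg = h\cdot\varphi(v\otimes g),
\]
where the last $h\cdot$ is the $U_2$-action. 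Reassembling over the direct sum decomposition of $W$ gives the desired isomorphism $U_1\cong U_2$ for general free $W$.

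For the dimension statement, apply the coinvariants functor. From $U_1\cong U_2$ we get $(U_1)_G\cong(U_2)_G$. Now $U_2 = V\otimes W$ with $G$ acting only on $W$, so as a $\Kb G$-module $U_2\cong W^{\oplus\dim_\Kb V}$ (choosing a $\Kb$-basis of $V$); since $W$ is free over $\Kb G$, so is $U_2$, and its rank is $(\dim_\Kb V)\cdot\rank_{\Kb G}W$. Applying $\dim_\Kb V_G=\rank_{\Kb G}V$ for free modules to both $W$ and $U_2$ yields
\[
\dim_\Kb (U_1)_G = \dim_\Kb (U_2)_G = \rank_{\Kb G} U_2 = (\dim_\Kb V)\,\rank_{\Kb G} W = (\dim_\Kb V)(\dim_\Kb W_G).
\]
There is no real obstacle here; the only point requiring a moment's care is verifying that $\varphi$ is $G$-equivariant in the precise sense stated (getting the inverse in the right spot), and making sure the reduction to $W=\Kb G$ is legitimate, i.e.\ that forming $U_1$ and $U_2$ is additive in $W$ — both are immediate from the definitions.
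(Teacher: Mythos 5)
Your proof is correct and follows essentially the same route as the paper: reduce to $W=\Kb G$, apply the untwisting map $v\otimes g\mapsto (g^{-1}\cdot v)\otimes g$, and deduce the dimension count from freeness of $U_2$. The extra details you supply (additivity in $W$, the equivariance check, the explicit inverse) are exactly the steps the paper leaves implicit.
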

\begin{proof}
We may assume $W=\Kb G$. In this case, the map
\[
U_1\to U_2, \quad
v\otimes g \mapsto (g^{-1}\cdot v) \otimes g
\]
is an isomorphism of $\Kb G$-modules. The second assertion follows because $U_2$
is a free module of rank equal to $(\dim_{\Kb} V)(\rank_{\Kb G} W)$.
\end{proof}

\subsection{The type generating function}\label{ss:type}

Let $\tp$ be a species. We write $\tp[n]$ for the space $\tp[\{1,\ldots,n\}]$.
The symmetric group $\Sr_n$ acts on $\tp[n]$ by
\[
\sigma\cdot x := \tp[\sigma](x)
\]
for $\sigma\in\Sr_n$, $x\in\tp[n]$. For example,
\[
\wL[n] \cong \Kb\Sr_n
\]
as $\Kb\Sr_n$-modules.

From now on, we assume that all species $\tp$ are \emph{finite-dimensional}.
This means that for each $n\geq 0$ the space $\tp[n]$ is finite-dimensional. 
The \emph{type generating function} of $\tp$ is the power series
\[
\type{\tp}{x} : = \sum_{n\geq 0} \dim_{\Kb} \tp[n]_{S_n}\, x^n.
\]

For example,
\[
\type{\wL}{x} = \sum_{n\geq 0} x^n=\frac{1}{1-x}.
\]
More generally, for any positive species $\tq$,
\begin{equation}\label{e:type-free}
\type{\Tc(\tq)}{x} = \frac{1}{1-\type{\tq}{x}}.
\end{equation}
This follows by a direct calculation or from~\cite[Theorem~1.4.2.b]{BerLabLer:1998}.

Let $\tp$ be a free monoid. It follows from~\eqref{e:type-free} that
\begin{equation}\label{e:type-free2}
1-\frac{1}{\type{\tp}{x}} \in \Nb\llb x\rrb.
\end{equation}
In other words, the reciprocal of the type generating function of a free
monoid has nonpositive integer coefficients (except for the first, which is $1$).

\subsection{Generating functions for Hadamard products}\label{ss:gen-had}

The type generating function of a Hadamard product $\tp\times\tq$ is in general not
determined by those of the factors. (It is however determined by their \emph{cycle indices}; see~\cite[Proposition~2.1.7.b]{BerLabLer:1998}.) 

The \emph{ordinary generating function} of a species $\tp$ is
\[
\ordi{\tp}{x} : = \sum_{n\geq 0} \dim_{\Kb} \tp[n]\, x^n.
\]

The Hadamard product of power series is defined by
\[
\bigl(\sum_{n\geq 0} a_n x^n\bigr)\times\bigl(\sum_{n\geq 0} b_n x^n\bigr) :=
\sum_{n\geq 0} a_nb_n x^n.
\]

\begin{proposition}\label{p:free-had}
Let $\tp$ be an arbitrary species and $\tq$ a species for which $\tq[n]$ is a free $\Kb\Sr_n$-module for every $n\geq 0$. Then
\begin{equation}\label{e:free-had}
\type{\tp\times \tq}{x} = \ordi{\tp}{x}\times \type{\tq}{x}. 
\end{equation}
\end{proposition}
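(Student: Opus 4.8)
\textbf{Proof proposal for Proposition~\ref{p:free-had}.}
The plan is to compute $\dim_{\Kb}(\tp\times\tq)[n]_{\Sr_n}$ for each fixed $n$ and show it equals $(\dim_{\Kb}\tp[n])(\dim_{\Kb}\tq[n]_{\Sr_n})$, which is exactly the $n$-th coefficient of $\ordi{\tp}{x}\times\type{\tq}{x}$. By definition, $(\tp\times\tq)[n]=\tp[n]\otimes\tq[n]$, and the $\Sr_n$-action on this space is the diagonal action, since the Hadamard product is defined functorially on bijections (the same bijection acts on both tensor factors). This is precisely the module called $U_1$ in Lemma~\ref{l:diagonal}, with $G=\Sr_n$, $V=\tp[n]$, and $W=\tq[n]$.

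The key step is then to invoke the hypothesis: $\tq[n]$ is free as a $\Kb\Sr_n$-module for every $n\geq 0$. Lemma~\ref{l:diagonal} applies directly and gives
\[
\dim_{\Kb}\bigl((\tp\times\tq)[n]\bigr)_{\Sr_n} = (\dim_{\Kb}\tp[n])\,(\dim_{\Kb}\tq[n]_{\Sr_n}).
\]
Summing over $n$ with the weight $x^n$ and comparing with the definitions of $\ordi{\tp}{x}$, $\type{\tq}{x}$, and the Hadamard product of power series yields~\eqref{e:free-had}. I would write this out as a short displayed chain of equalities rather than prose.

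Since every ingredient is already in place — Lemma~\ref{l:diagonal} does the representation-theoretic work, and the identification of the $\Sr_n$-module structure on $(\tp\times\tq)[n]$ is immediate from the functoriality built into the definition of the Hadamard product — there is essentially no obstacle. The only point requiring a moment's care is confirming that the $\Sr_n$-action on $\tp[n]\otimes\tq[n]$ is genuinely the diagonal one and not, say, acting on only one factor; this follows because $(\tp\times\tq)[\sigma]=\tp[\sigma]\otimes\tq[\sigma]$ by the definition "and on bijections similarly" in Section~\ref{ss:hadamard}. I expect the whole proof to occupy only a few lines.
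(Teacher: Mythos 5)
Your argument is correct and coincides with the paper's own proof: both identify $(\tp\times\tq)[n]$ as $\tp[n]\otimes\tq[n]$ with the diagonal $\Sr_n$-action and apply Lemma~\ref{l:diagonal} to get $\dim_{\Kb}\bigl((\tp\times\tq)[n]\bigr)_{\Sr_n}=(\dim_{\Kb}\tp[n])(\dim_{\Kb}\tq[n]_{\Sr_n})$, from which the identity of power series follows coefficientwise. Your additional remark verifying that the action is genuinely diagonal is a correct (if routine) elaboration of what the paper leaves implicit.
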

\begin{proof}
In view of Lemma~\ref{l:diagonal}, we have
\[
\dim_{\Kb} \bigl((\tp\times\tq)[n]\bigr)_{\Sr_n} = (\dim_{\Kb} \tp[n])(\dim_{\Kb} \tq[n]_{\Sr_n})
\]
from which the result follows.
\end{proof}

Since $\type{\wL}{x}$ is the unit for the Hadamard product of power series, we have from~\eqref{e:free-had} that
\begin{equation}\label{e:type-hadamard}
\type{\tp\times \wL}{x} = \ordi{\tp}{x}.
\end{equation}
More generally, for any positive species $\tq$,
\begin{equation}\label{e:type-hadamard2}
\type{\tp\times \Tc(\tq)}{x} = \ordi{\tp}{x}\times  \frac{1}{1-\type{\tq}{x}}.
\end{equation}
This follows from~\eqref{e:type-free} and~\eqref{e:free-had};
the $\Kb\Sr_n$-module $\Tc(\tq)[n]$ is free by~\cite[Lemma~B.18]{AguMah:2010}.

\subsection{The ordinary generating function of a connected Hopf monoid}\label{ss:ordi-hopf}

 Let $\thh$ be a connected $q$-Hopf monoid. 
 By Corollary~\ref{c:freeness}, $\thh\times\wL$ is a free monoid. 
 Let $\tq$ be a basis. Thus, $\tq$ is a positive species such that
\[
\thh\times\wL\cong \Tc(\tq)
\]
as monoids. 

 \begin{proposition}\label{p:ordi-type}
 In the above situation,
 \begin{equation}\label{e:ordi-type}
 \ordi{\thh}{x} = \frac{1}{1-\type{\tq}{x}}.
 \end{equation}
 \end{proposition}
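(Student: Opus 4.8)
The plan is to chain together two identities already established in the excerpt: equation~\eqref{e:type-hadamard}, which says $\type{\thh\times\wL}{x}=\ordi{\thh}{x}$ for any species $\thh$, and equation~\eqref{e:type-free}, which computes the type generating function of a free monoid $\Tc(\tq)$ as $\frac{1}{1-\type{\tq}{x}}$. The hypothesis of the proposition supplies an isomorphism of monoids $\thh\times\wL\cong\Tc(\tq)$ for a positive species $\tq$; since the type generating function depends only on the underlying species (in fact only on the dimensions of the spaces of coinvariants $\tp[n]_{\Sr_n}$), an isomorphism of monoids is certainly enough to conclude $\type{\thh\times\wL}{x}=\type{\Tc(\tq)}{x}$.

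Concretely, the steps are: (i) invoke~\eqref{e:type-hadamard} to write $\ordi{\thh}{x}=\type{\thh\times\wL}{x}$; (ii) use the given monoid isomorphism $\thh\times\wL\cong\Tc(\tq)$ to replace the left-hand species, obtaining $\type{\thh\times\wL}{x}=\type{\Tc(\tq)}{x}$; (iii) apply~\eqref{e:type-free} to evaluate $\type{\Tc(\tq)}{x}=\frac{1}{1-\type{\tq}{x}}$; (iv) concatenate the three equalities to get~\eqref{e:ordi-type}. This is essentially a one-line computation, so the proof is little more than a pointer to the relevant earlier formulas.

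There is no real obstacle here; the only thing worth a moment's care is step~(ii), namely checking that $\type{(-)}{x}$ is invariant under isomorphism of species (and hence under the weaker datum of a monoid isomorphism). This is immediate from the definition of $\type{\tp}{x}$ as $\sum_{n\ge 0}\dim_{\Kb}\tp[n]_{\Sr_n}\,x^n$, since a species isomorphism $\thh\times\wL\cong\Tc(\tq)$ restricts, for each $n$, to a $\Kb\Sr_n$-module isomorphism $(\thh\times\wL)[n]\cong\Tc(\tq)[n]$ and therefore induces an isomorphism on coinvariants. One should also note in passing that $\thh\times\wL$ is finite-dimensional (as $\thh$ is assumed finite-dimensional and $\wL[n]\cong\Kb\Sr_n$ is finite-dimensional), so the generating functions in play are well-defined. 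I would write the proof as the display
\[
\ordi{\thh}{x}\overset{\eqref{e:type-hadamard}}{=}\type{\thh\times\wL}{x}=\type{\Tc(\tq)}{x}\overset{\eqref{e:type-free}}{=}\frac{1}{1-\type{\tq}{x}},
\]
with a sentence preceding it recalling that the middle equality holds because $\thh\times\wL\cong\Tc(\tq)$ as species (a fortiori as monoids) and the type generating function is an isomorphism invariant.
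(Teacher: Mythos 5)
Your proof is correct and is essentially identical to the paper's own, which consists of exactly the chain $\ordi{\thh}{x}=\type{\thh\times\wL}{x}=\type{\Tc(\tq)}{x}=\frac{1}{1-\type{\tq}{x}}$ justified by~\eqref{e:type-hadamard}, the monoid isomorphism, and~\eqref{e:type-free}. The extra remarks on isomorphism-invariance and finite-dimensionality are fine but not needed beyond what the paper takes for granted.
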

 \begin{proof}
 We have, by~\eqref{e:type-free} and~\eqref{e:type-hadamard},
 \[
 \ordi{\thh}{x} =
 \type{\thh\times\wL}{x}=\type{\Tc(\tq)}{x}=\frac{1}{1-\type{\tq}{x}}.  \qedhere
 \]
 \end{proof}

\begin{theorem}\label{t:ordi-hopf}
Let $\thh$ be a connected $q$-Hopf monoid. Then
\begin{equation}\label{e:ordi-hopf}
1-\frac{1}{\ordi{\thh}{x}} \in \Nb\llb x\rrb.
\end{equation}
\end{theorem}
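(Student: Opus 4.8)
The plan is to deduce Theorem~\ref{t:ordi-hopf} directly from the structural result Corollary~\ref{c:freeness} together with the generating function identities assembled in Section~\ref{ss:ordi-hopf}. The key observation is that Corollary~\ref{c:freeness} (the case $p=1$, $q$ arbitrary, of Theorem~\ref{t:freeness}) tells us that $\thh\times\wL$ is free as a monoid whenever $\thh$ is a connected $q$-Hopf monoid; here $\wL=\wL_1$ is the Hopf monoid of linear orders, which is indeed of the form $\wL_q$ with $q=1$, so the hypothesis of the corollary is met.

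First I would invoke Corollary~\ref{c:freeness} to obtain a positive species $\tq$ with an isomorphism of monoids $\thh\times\wL\cong\Tc(\tq)$; this is exactly the setup preceding Proposition~\ref{p:ordi-type}. Next I would apply Proposition~\ref{p:ordi-type}, which gives
\[
\ordi{\thh}{x} = \frac{1}{1-\type{\tq}{x}}.
\]
Taking reciprocals yields $1 - \dfrac{1}{\ordi{\thh}{x}} = \type{\tq}{x}$. Finally, since $\tq$ is a species (assumed finite-dimensional), its type generating function $\type{\tq}{x} = \sum_{n\geq 0}\dim_\Kb \tq[n]_{\Sr_n}\,x^n$ has nonnegative integer coefficients; moreover $\tq$ is positive, so $\tq[\emptyset]=0$ and the constant term vanishes. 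Hence $\type{\tq}{x}\in\Nb\llb x\rrb$, which is precisely the claim~\eqref{e:ordi-hopf}.

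There is essentially no obstacle here: the theorem is a formal consequence of results already established. The one point requiring a word of care is the implicit finite-dimensionality assumption—Theorem~\ref{t:ordi-hopf} only makes sense for finite-dimensional $\thh$, so that $\ordi{\thh}{x}$ is a well-defined power series with a multiplicative inverse in $\Kb\llb x\rrb$ (its constant term is $\dim_\Kb\thh[\emptyset]=1$ by connectedness). Under that standing assumption of Section~\ref{s:genfun}, $\thh\times\wL$ is again finite-dimensional, so its basis $\tq$ is finite-dimensional and the argument above goes through verbatim. Thus the proof reduces to citing Corollary~\ref{c:freeness}, Proposition~\ref{p:ordi-type}, and the trivial fact that the type generating function of a positive species lies in $\Nb\llb x\rrb$.
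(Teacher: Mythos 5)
Your proposal is correct and follows exactly the paper's own argument: the paper also invokes Corollary~\ref{c:freeness} to write $\thh\times\wL\cong\Tc(\tq)$ for a positive species $\tq$, applies Proposition~\ref{p:ordi-type} to get $1-\frac{1}{\ordi{\thh}{x}}=\type{\tq}{x}$, and concludes from the nonnegativity of the type generating function. Your added remark about the standing finite-dimensionality assumption is consistent with the convention the paper fixes in Section~\ref{ss:type}.
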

\begin{proof}
From~\eqref{e:ordi-type} we deduce
\[
1-\frac{1}{\ordi{\thh}{x}} = \type{\tq}{x}
 \in \Nb\llb x\rrb. \qedhere
 \]
\end{proof}

In the terminology of Section~\ref{s:boolean} below, Theorem~\ref{t:ordi-hopf} states that the Boolean transform of the
dimension sequence of a connected $q$-Hopf monoid
is nonnegative; see~\eqref{e:bool-tran}. Proposition~\ref{p:ordi-type} states more
precisely that the Boolean transform of the ordinary generating function of $\thh$ is the
type generating function of $\tq$.

\begin{example}\label{eg:globaldes}
We have
\[
1-\frac{1}{\sum_{n\geq 0} n! x^n} =  x + x^2 + 3x^3 + 13x^4 + 71x^5+ 461 x^6 + \cdots.
\]
The Boolean transform $b_n$ of the dimension sequence of $\wL$
admits the following description. Say that a linear order on the set $[n]$ is
\emph{decomposable} if it is the concatenation of a linear order on $[i]$
and a linear order on $[n]\setminus [i]$ for some $i$ such that $1\leq i<n$.
Every linear order is the concatenation of a unique sequence of indecomposable ones.
It then follows from~\eqref{e:bool-tran3} that $b_n$ is the number of indecomposable linear orders on $[n]$.
The sequence $b_n$ is~\cite[A003319]{Slo:oeis}. 
\end{example}

\begin{example}\label{eg:atomic}
A partition $X$ of the set $[n]$ is \emph{atomic} if $[i]$
is not a union of blocks of $X$ for any $i$ such that $1\leq i<n$.
The dimension sequence of the Hopf monoid $\tPi$ is the sequence of Bell numbers,
and its Boolean transform counts the number of atomic partitions of $[n]$.
The latter is sequence~\cite[A074664]{Slo:oeis}.
\end{example}

Let $a_n:=\dim_{\Kb} \thh[n]$. The conditions imposed by~\eqref{e:ordi-hopf} on the first terms of this sequence are as follows.
\begin{gather*}
a_1^2 \leq a_2,\\
2a_1a_2-a_1^3 \leq a_3,\\
2a_1a_3 - 3a_1^2a_2+a_2^2 + a_1^4 \leq a_4.
\end{gather*}

\begin{example}\label{eg:elements}
Suppose that the sequence starts with
\[
a_1=1,\quad a_2=2,\qand a_3=3.
\]
The third inequality above then implies $a_4\geq 5$. It follows that the species $\te$
\emph{of elements} (for which $\dim_{\Kb} \te[n]=n$) does not carry a bimonoid structure.
This result was obtained by different means in~\cite[Example~3.5]{AL:2012}. 
\end{example}

The calculation of Example~\ref{eg:globaldes} can be generalized to all free
monoids in place of $\wL$. To this end, let us say that a composition $F$ of the set
$[n]$ is \emph{decomposable} if $F=F_1\cdot F_2$ for some $F_1\vDash [i]$,
$F_2\vDash [n]\setminus[i]$, and some $i$ such that $1\leq i<n$.

\begin{proposition}\label{p:globaldes}
For any positive species $\tp$, the Boolean transform of the dimension sequence of 
the Hopf monoid $\Tc(\tp)$ is given by
\[
b_n =  \sum_{\substack{F\vDash [n] \\ \text{$F$ indecomposable}}} \dim_\Kb \tp(F).
\]
\end{proposition}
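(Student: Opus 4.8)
The plan is to compute the reciprocal of the ordinary generating function of $\Tc(\tp)$ directly, using the combinatorial description of $\type{\tp}{x}$ and the structure of the Boolean transform. By Proposition~\ref{p:ordi-type} (or directly from~\eqref{e:type-hadamard} and~\eqref{e:type-free}), the Boolean transform of the dimension sequence of $\Tc(\tp)$ equals $\type{\tp}{x}$; more precisely, if $a_n = \dim_\Kb \Tc(\tp)[n]$, then
\[
1 - \frac{1}{\sum_{n\geq 0} a_n x^n} = \type{\tp}{x} = \sum_{n\geq 1}\bigl(\dim_\Kb \tp[n]_{\Sr_n}\bigr)\, x^n.
\]
So the claim amounts to showing that $\dim_\Kb \tp[n]_{\Sr_n}$ — the coefficient $b_n$ of the Boolean transform — can be rewritten as $\sum_{F} \dim_\Kb \tp(F)$, the sum over indecomposable compositions $F$ of $[n]$. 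This is not literally an equality of the two displayed quantities term by term in the naive sense; rather, I expect the intended statement is the identity of generating functions $\sum_n b_n x^n = \sum_n \bigl(\sum_{F\vDash[n]\text{ indec.}} \dim_\Kb \tp(F)\bigr) x^n$, which is what I will establish.

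First I would set up the bookkeeping. Let $t_n := \dim_\Kb \tp[n]_{\Sr_n}$, so $\type{\tp}{x} = \sum_{n\geq 1} t_n x^n$, and let $a_n = \dim_\Kb \Tc(\tp)[n]$. From $\Tc(\tp)[n] = \bigoplus_{F\vDash[n]} \tp(F)$ we get $a_n = \sum_{F\vDash[n]} \dim_\Kb \tp(F)$. The key structural fact is that concatenation gives a bijection between compositions of $[n]$ and sequences $(F_1,\dots,F_r)$ of \emph{indecomposable} compositions with $F_\ell \vDash$ the $\ell$-th block of the induced set partition of $[n]$ into consecutive intervals; equivalently, every $F\vDash[n]$ factors uniquely as a concatenation of indecomposable compositions. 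Moreover $\tp(F) \cong \tp(F_1)\otimes\cdots\otimes\tp(F_r)$ by~\eqref{e:sp-comp}. Introducing $c_n := \sum_{F\vDash[n]\text{ indecomposable}} \dim_\Kb \tp(F)$ and its generating function $C(x) = \sum_{n\geq 1} c_n x^n$, the unique-factorization statement translates into $\sum_{n\geq 0} a_n x^n = \frac{1}{1-C(x)}$, exactly as the $\tp=\wX$ case (Example~\ref{eg:globaldes}) specializes.

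Next, comparing with $\sum_{n\geq 0} a_n x^n = \frac{1}{1-\type{\tp}{x}}$ (which is~\eqref{e:type-free}), I conclude $C(x) = \type{\tp}{x}$, hence $c_n = b_n$ for all $n$, which is the assertion. The one genuine point to verify carefully — the main obstacle — is the claim that the number of compositions of $[n]$ weighted by $\dim_\Kb\tp(F)$ has generating function $\frac{1}{1-C(x)}$. This requires checking that decomposability as defined (splitting $[n]$ as $[i]\sqcup([n]\setminus[i])$ respecting the interval structure) really does give a unique maximal factorization into indecomposables, i.e. that ``indecomposable'' is well behaved under concatenation: a concatenation $F_1\cdots F_r$ of indecomposables has those $F_\ell$ as its unique indecomposable factors. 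This is the combinatorial analogue of unique factorization of a word into ``connected'' pieces and follows from the observation that the set of prefixes $[i]$ that witness a decomposition of $F = F_1\cdots F_r$ is exactly $\{\,|F_1|, |F_1|+|F_2|, \dots\,\}$ minus $\{n\}$; I would spell this out in one or two sentences. Everything else is the formal manipulation of generating functions already used in Section~\ref{s:genfun}, together with the compatibility $\tp(F_1\cdot F_2)\cong\tp(F_1)\otimes\tp(F_2)$ of~\eqref{e:sp-comp}.

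Alternatively, and perhaps more cleanly for the writeup, I would avoid generating functions on the $\Tc(\tp)$ side and instead argue directly: by Proposition~\ref{p:ordi-type} the Boolean transform of $\ordi{\Tc(\tp)}{x}$ is $\type{\tp}{x}$, and separately by Appendix-style reasoning (the defining recursion~\eqref{e:bool-tran3} invoked in Example~\ref{eg:globaldes}) the Boolean transform coefficient $b_n$ counts ``indecomposable'' objects weighted appropriately. Concretely, $b_n$ is characterized by $a_n = \sum_{r\geq 1}\ \sum_{\substack{n_1+\cdots+n_r=n\\ n_i\geq 1}} b_{n_1}\cdots b_{n_r}$, and the same recursion is satisfied by $c_n = \sum_{F\vDash[n]\text{ indec.}}\dim_\Kb\tp(F)$ in view of unique factorization and~\eqref{e:sp-comp}; since the recursion determines the sequence from $(a_n)$, we get $b_n = c_n$. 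I expect the published proof takes essentially this second route, as it mirrors Example~\ref{eg:globaldes} almost verbatim with $\dim_\Kb\wX(F)$ replaced by $\dim_\Kb\tp(F)$.
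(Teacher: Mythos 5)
Your core combinatorial argument is correct, but it is wrapped in a genuinely false identification that you must remove. You assert (in both of your routes) that the Boolean transform of the dimension sequence of $\Tc(\tp)$ equals $\type{\tp}{x}$, i.e.\ that
\[
1-\frac{1}{\ordi{\Tc(\tp)}{x}}=\type{\tp}{x}.
\]
This confuses the ordinary and type generating functions. Equation~\eqref{e:type-free} reads $\type{\Tc(\tp)}{x}=\frac{1}{1-\type{\tp}{x}}$, with the \emph{type} generating function on both sides; it does not compute $\ordi{\Tc(\tp)}{x}$. Likewise, Proposition~\ref{p:ordi-type} applied to $\thh=\Tc(\tp)$ involves a basis $\tq$ of $\Tc(\tp)\times\wL$, which is $\tp\star\wX$ and not $\tp$, so the Boolean transform of $\ordi{\Tc(\tp)}{x}$ is $\type{\tp\star\wX}{x}$. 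The case $\tp=\wX$ already refutes your display: $\type{\wX}{x}=x$, whereas the Boolean transform of $\ordi{\wL}{x}=\sum_n n!\,x^n$ is $x+x^2+3x^3+\cdots$. Consequently your intermediate conclusion ``$C(x)=\type{\tp}{x}$'' is false, as is the claim that $b_n=\dim_\Kb\tp[n]_{\Sr_n}$ can be rewritten as the sum over indecomposable compositions (already wrong at $n=2$ for $\tp=\wX$).

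Fortunately, the step you single out as ``the one genuine point to verify'' is in fact the entire proof, and it is sound: every $F\vDash[n]$ factors uniquely as a concatenation of indecomposable compositions, $\dim_\Kb\tp(F)$ is multiplicative under concatenation by~\eqref{e:sp-comp}, hence $\ordi{\Tc(\tp)}{x}=\frac{1}{1-C(x)}$ with $C(x)=\sum_{n\geq 1}c_nx^n$ and $c_n=\sum_{F\text{ indec.}}\dim_\Kb\tp(F)$. By the definition~\eqref{e:bool-tran} (equivalently the recursion underlying~\eqref{e:bool-tran3}) this alone gives $c_n=b_n$; no comparison with $\type{\tp}{x}$ is needed, and none is possible. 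Once stripped of the false identification, your argument is exactly the route the paper mentions only parenthetically (``alternatively, by~\eqref{e:bool-tran3} as in Example~\ref{eg:globaldes}''). The paper's primary proof is different: it invokes $\Tc(\tp\star\wX)\cong\Tc(\tp)\times\wL$ from Theorem~\ref{t:had-free-monoid} to identify the Boolean transform with $\type{\tp\star\wX}{x}$, then computes the $\Sr_n$-coinvariants of the free $\Kb\Sr_n$-module $(\tp\star\wX)[n]$ and observes that $F\wedge C=([n])$ for the canonical linear order $C$ precisely when $F$ is indecomposable. That route stays within the machinery of Section~\ref{ss:ordi-hopf}; yours is more elementary, but it must not pass through $\type{\tp}{x}$.
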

\begin{proof}
We have from~\eqref{e:had-free-monoid} that
\[
\Tc(\tp\star\wX) \cong \Tc(\tp)\times \Tc(\wX) \cong \Tc(\tp)\times\wL.
\]
Hence, by~\eqref{e:ordi-type},
\[
\ordi{\Tc(\tp)}{x} = \frac{1}{1-\type{\tp\star\wX}{x}}.
\]
Thus, $\type{\tp\star\wX}{x}$ is the Boolean transform of $\ordi{\Tc(\tp)}{x}$,
and hence $b_n = \dim_{\Kb} \bigl((\tp\star\wX)[n]\bigr)_{\Sr_n}$.

From~\eqref{e:XF} and~\eqref{e:star} we have that
\[
(\tp\star\wX)[I] = \bigoplus_{(F,C):\,F\wedge C=(I)} \tp(F)
\]
where $F$ varies over set compositions and $C$ varies over linear orders on $I$.
It follows that $(\tp\star\wX)[n]$ is a free $\Kb\Sr_n$-module with
$\Sr_n$-coinvariants equal to the space 
\[
\bigoplus_{F\vDash [n],\, F\wedge C_n=([n])} \tp(F)
\]
where $C_n$ denotes the canonical linear order on $[n]$. The result follows since
$F\wedge C_n=([n])$ if and only if $F$ is indecomposable.

(Alternatively, we may prove this result by appealing to~\eqref{e:bool-tran3} as in Example~\ref{eg:globaldes}.)
\end{proof}


Let $\thh$ and $\tk$ be connected Hopf monoids.
The Boolean transform of the dimension sequence of $\thh\times\tk$
can be explicitly described 
in terms of the Boolean transforms of the dimension sequences of $\thh$ and $\tk$;
see Proposition~\ref{p:bool-had} below.

For example, let $b_n$ be the Boolean transform
of the dimension sequence of $\tLL$ (Example~\ref{eg:LL}).
This is sequence~\cite[A113871]{Slo:oeis} and its
first few terms are $1,3,29,499$.
Recalling that $\tLL\cong \wL^*\times\wL$ and
employing~\eqref{e:bool-had} we readily obtain that $b_n$
counts the number of pairs $(l,m)$ of linear orders on $[n]$
such that $\alpha \wedge \beta = (n)$,
where the sequence of indecomposables of $l$ has size $\alpha$ and
that of $m$ has size $\beta$.

\begin{remark}
Theorem~\ref{t:ordi-hopf} states that if $\thh$ is a connected $q$-Hopf monoid, then 
the Boolean transform of $\ordi{\thh}{x}$ is nonnegative. This was deduced by considering the Hadamard product of $\thh$ with $\wL$.
One may also consider the Hadamard product of $\thh$ with an arbitrary free Hopf monoid.
Then, using Theorem~\ref{t:freeness}
together with~\eqref{e:type-free2} and~\eqref{e:type-hadamard2}, one obtains that
for any series $A(x)\in \Nb\llb x\rrb$ with nonnegative Boolean transform, the Hadamard product $\ordi{\thh}{x}\times A(x)$ also has nonnegative Boolean transform.
However, this does not impose any additional conditions on  $\ordi{\thh}{x}$,
in view of Corollary~\ref{c:submonoid}.
\end{remark}

\subsection{Non-attainable sequences}\label{ss:non}

The question arises whether condition~\eqref{e:ordi-hopf} characterizes
the dimension sequence of a connected Hopf monoid. In other words,
given a sequence of nonnegative integers $b_n$, $n\geq 1$, 
is there a connected $q$-Hopf monoid $\thh$ such that
\begin{equation}\label{e:ordi-hopf2}
1-\frac{1}{\ordi{\thh}{x}} = \sum_{n\geq 1} b_n x^n
\end{equation}
holds? In other words, the question is
whether $b_n$ is the Boolean transform of the
dimension sequence of a connected $q$-Hopf monoid. The answer is negative,
as the following result shows.

\begin{proposition}\label{p:ordi-hopf}
Consider the sequence defined by
\[
b_n:= \begin{cases}
 1 & \text{ if $n=2$,} \\
 0        & \text{ otherwise.}
\end{cases}
\]
Then there is no connected $q$-bimonoid $\thh$ for which~\eqref{e:ordi-hopf2} holds,
regardless of $q$.
\end{proposition}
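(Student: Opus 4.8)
The plan is to convert the generating-function hypothesis into a rigid constraint on the numbers $\dim_\Kb\thh[I]$ and then to contradict it with the bimonoid compatibility axiom on a four-element set. First I would solve~\eqref{e:ordi-hopf2} for the given sequence: it reads $1-1/\ordi{\thh}{x}=x^2$, hence $\ordi{\thh}{x}=1/(1-x^2)=\sum_{k\ge 0}x^{2k}$. So a connected $q$-bimonoid $\thh$ realizing this sequence would satisfy $\dim_\Kb\thh[I]=1$ whenever $\abs{I}$ is even and $\dim_\Kb\thh[I]=0$ whenever $\abs{I}$ is odd; in particular $\thh[\{i\}]=0$ for every singleton, while $\thh[I]$ is a line for every two-element and every four-element set $I$.

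Next I would invoke two standard facts. From Section~\ref{ss:con-hopf}, for any decomposition $I=S\sqcup T$ of a connected bimonoid the composite $\Delta_{S,T}\circ\mu_{S,T}$ is the identity of $\thh[S]\otimes\thh[T]$. Taking $\abs{S}=\abs{T}=2$, so that $\thh[S]\otimes\thh[T]$ and $\thh[I]$ are both one-dimensional and nonzero, this forces $\mu_{S,T}$ and $\Delta_{S,T}$ to be mutually inverse isomorphisms. Now specialize the compatibility square~\eqref{e:compr} to $I=\{1,2,3,4\}$ with $S=\{1,2\}$, $T=\{3,4\}$ and $S'=\{1,3\}$, $T'=\{2,4\}$; then $A=S\cap S'=\{1\}$, $B=\{2\}$, $C=\{3\}$, $D=\{4\}$, so the path in~\eqref{e:compr} that goes through $\thh[A]\otimes\thh[B]\otimes\thh[C]\otimes\thh[D]=0$ is the zero map, and commutativity of~\eqref{e:compr} then gives $\Delta_{S',T'}\circ\mu_{S,T}=0$.

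This is the contradiction: $\mu_{S,T}$ is an isomorphism by the previous paragraph, and so is $\Delta_{S',T'}$ (it has the one-sided inverse $\mu_{S',T'}$, and both its source and target are lines), whence $\Delta_{S',T'}\circ\mu_{S,T}$ is a nonzero map between one-dimensional spaces and cannot vanish. Hence no connected $q$-bimonoid has dimension sequence $1,0,1,0,\dots$, which is exactly the assertion; note that the braiding $\beta_q$ never enters the argument, consistent with the phrase ``regardless of $q$''. I do not anticipate a genuine obstacle: the only points needing care are observing that the ``degenerate'' instances of~\eqref{e:compr} (those with $\abs{S}$ or $\abs{T}$ equal to $1$ or $3$) contain a zero tensor factor and hence impose nothing, and checking that the single instance above --- the one with $A,B,C,D$ all singletons --- is the one that bites.
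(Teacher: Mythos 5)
Your proposal is correct and follows essentially the same route as the paper's proof: compute that the dimension sequence must be $1,0,1,0,\dots$, note that $\Delta_{S,T}\mu_{S,T}=\id$ forces $\mu_{S,T}$ and $\Delta_{S,T}$ to be inverse isomorphisms when $\abs{S}$ and $\abs{T}$ are even, and then derive a contradiction from the compatibility diagram~\eqref{e:compr} on a four-element set with the same two transversal decompositions, where the upper path factors through the zero space while the lower path is an isomorphism of lines. No gaps; nothing further is needed.
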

\begin{proof}
Suppose such $\thh$ exists; let $a_n$ be its dimension sequence.
Then $b_n$ is the Boolean transform of $a_n$, and~\eqref{e:bool-tran3}
implies that
\[
a_n:= \begin{cases}
 1 & \text{ if $n$ is even,} \\
 0        & \text{ if $n$ is odd.}
\end{cases}
\]

Recall from Section~\ref{ss:con-hopf} that for any decomposition $I=S\sqcup T$,
the composite $\Delta_{S,T}\mu_{S,T}$ is the identity. 
It follows in the present situation that
$\mu_{S,T}$ and $\Delta_{S,T}$ are inverse 
whenever $S$ and $T$ are of even cardinality.
Now let
\[
I=\{a,b,c,d\},\quad S=\{a,b\},\quad T=\{c,d\},\quad S'=\{a,c\},\qand T'=\{b,d\}
\]
and consider the commutative diagram~\eqref{e:compr}.
The bottom horizontal composite in this diagram is an isomorphism between one-dimensional vector spaces,
while the composite obtained by going up, across and down is the zero map.
This is a contradiction.
\end{proof}

Let $k$ be a positive integer and define, for $n\geq 1$,
\[
b^{(k)}_n:= \begin{cases}
 1 & \text{ if $n=k$,} \\
 0        & \text{ otherwise.}
\end{cases}
\]
The inverse Boolean transform of $b^{(k)}_n$ is
\[
a^{(k)}_n:= \begin{cases}
 1 & \text{ if $n\equiv 0\!\!\mod k$,} \\
 0        & \text{ otherwise.}
\end{cases}
\]
An argument similar to that in Proposition~\ref{p:ordi-hopf} shows that, if $k\geq 2$,
there is no connected $q$-Hopf monoid with dimension sequence $a^{(k)}_n$.
(The exponential Hopf monoid~\cite[Example~8.15]{AguMah:2010} has dimension sequence $a^{(1)}_n$.)

\subsection{Comparison with previously known conditions}\label{ss:comparison}

The paper~\cite{AL:2012} provides various sets of conditions that the dimension sequence $a_n$
of a connected Hopf monoid must satisfy. For instance,~\cite[Proposition~4.1]{AL:2012}
states that
\begin{equation}\label{e:submult}
a_ia_j \leq a_n
\end{equation}
for $n=i+j$ and every $i,j\geq 1$. In addition, the coefficients of the power series
\begin{equation}\label{e:exp}
\frac{1 + \sum_{n\geq 1} a_n x^n}{1 + \sum_{n\geq 1} \frac{a_n}{n!} x^n}
\end{equation}
are nonnegative~\cite[Corollary 3.3]{AL:2012}, and~\cite[Equation~(3.2)]{AL:2012} states that
\begin{equation}\label{e:addcond}
a_3\geq 3a_2a_1-2a_1^3.
\end{equation}
We proceed to compare these conditions with those imposed by Theorem~\ref{t:ordi-hopf}.

The inequalities~\eqref{e:submult} are implied by Theorem~\ref{t:ordi-hopf}, 
in view of Lemma~\ref{l:submul}. 
An example of a sequence that satisfies~\eqref{e:submult} but whose  Boolean transform fails to be nonnegative is the following:
\[
a_n := \begin{cases}
n & \text{ if $n\leq 4$,} \\
2^n         & \text{ if $n\geq 5$.}
\end{cases}
\]
(The first terms of the Boolean transform are $b_1=1$, $b_2=1$, $b_3=0$, $b_4=-1$.)
Thus, the conditions imposed by Theorem~\ref{t:ordi-hopf} are
strictly stronger than~\eqref{e:submult}.

Condition~\eqref{e:exp} is also implied by Theorem~\ref{t:ordi-hopf}, 
in view of Lemma~\ref{l:increasing} (with $w_n=\frac{1}{n!}$). 

On the other hand, condition~\eqref{e:addcond} is \emph{not} implied by 
Theorem~\ref{t:ordi-hopf}. To see this, let $a_n$ be the sequence of Fibonacci numbers,
defined by $a_0=a_1=1$ and
\[
a_n = a_{n-1}+a_{n-2}
\]
for $n\geq 2$. The Boolean transform is nonnegative; indeed, it is simply given by
\[
b_n = \begin{cases}
1 & \text{ if $n\leq 2$,} \\
0 & \text{ otherwise.}
\end{cases}
\]
However, condition~\eqref{e:addcond} is not satisfied.

The previous example shows that there is no connected Hopf monoid with
dimensions given by the Fibonacci sequence. It also provides another example
for which the answer to question~\eqref{e:ordi-hopf2} is negative.

\appendix

\section{The Boolean transform}\label{s:boolean}

We recall the Boolean transform of a sequence and discuss some
consequences of its nonnegativity. We provide an explicit formula for the
Boolean transform of a Hadamard product in terms of the transforms of the factors.

\subsection{Boolean transform and integer compositions}\label{ss:bool-com}

Let $a_n$, $n\geq 1$, be a sequence of scalars. Its \emph{Boolean transform}
is the sequence $b_n$, $n\geq 1$, defined by
\begin{equation}\label{e:bool-tran}
\sum_{n\geq 1} b_n x^n := 1 - \frac{1}{1+\sum_{n\geq 1} a_n x^n}.
\end{equation}
Equivalently, the sequence $b_n$ can be determined recursively from
\begin{equation}\label{e:bool-tran2}
a_n - \sum_{k=1}^{n-1} a_{n-k}b_k - b_n = 0.
\end{equation}

We also refer to the power series $\sum_{n\geq 1} b_n x^n$ as the Boolean
transform of the power series $\sum_{n\geq 1} a_n x^n$.

\begin{remark}
In the literature on noncommutative probability~\cite{SpeWor:1995}, if $a_n$ is the sequence
of \emph{moments} (of a noncommutative random variable), then $b_n$ is
the sequence of \emph{Boolean cumulants}. The Boolean transform is also
called the \emph{$B$-transform}~\cite{Pop:2009}.
\end{remark}

A \emph{composition} of a nonnegative integer $n$ is a sequence $\alpha=(n_1,\ldots,n_k)$
of positive integers such that
\[
n_1+\cdots+n_k=n.
\]
We write $\alpha\vDash n$. 

Given a sequence $a_n$ and a composition $\alpha\vDash n$ as above, we let
\[
a_\alpha := a_{n_1}\cdots a_{n_k}.
\]
The sequence $a_n$ can be recovered from its Boolean transform $b_n$ by
\begin{equation}\label{e:bool-tran3}
a_n = \sum_{\alpha\vDash n} b_\alpha.
\end{equation}


Given compositions $\sigma=(s_1,\ldots,s_j)\vDash s$ and $\tau=(t_1,\ldots,t_k)\vDash t$, their concatenation
\[
\sigma\cdot\tau := (s_1,\ldots,s_j,t_1,\ldots,t_k)
\]
is a composition of $s+t$.

The set of compositions of $n$ is a Boolean lattice under refinement.
The minimum element is the composition $(n)$ and the maximum is $(1,\ldots,1)$. 
The meet operation and concatenation interact as follows:
\begin{equation}\label{e:meet-conc}
(\alpha\cdot\alpha')\wedge(\beta\cdot\beta') = (\alpha\wedge\beta)\cdot(\alpha'\wedge\beta'),
\end{equation}
where $\alpha,\beta\vDash n$ and $\alpha',\beta'\vDash n'$.

\subsection{Consequences of nonnegativity of the Boolean transform}\label{ss:nonnegative}

We say that a real sequence $a_n$ has nonnegative Boolean transform when all the terms
$b_n$ of its Boolean transform are nonnegative.

\begin{lemma}\label{l:increasing}
Let $a_n$ be a real sequence with nonnegative Boolean transform.
Let $w_n$ be a weakly decreasing sequence such that $w_1\leq 1$.
Then the coefficients of the power series
\[
1-\frac{1+\sum_{n\geq 1} w_na_n x^n}{1+\sum_{n\geq 1} a_n x^n} 
\qand
\frac{1 + \sum_{n\geq 1} a_n x^n}{1 + \sum_{n\geq 1} w_na_n x^n}
\]
are nonnegative.
\end{lemma}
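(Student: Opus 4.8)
The plan is to argue entirely with formal power series. Write $A(x) = 1 + \sum_{n\ge 1} a_n x^n$ and $A_w(x) = 1 + \sum_{n\ge 1} w_n a_n x^n$, and let $B(x) = \sum_{n\ge 1} b_n x^n$ be the Boolean transform of $(a_n)$, so that $A(x)^{-1} = 1 - B(x)$ by \eqref{e:bool-tran}. First I would record that $a_n \ge 0$ for all $n$: this is immediate from \eqref{e:bool-tran3}, which writes $a_n$ as a sum of products of the nonnegative numbers $b_i$. I will treat the two series in the statement separately, and the second will be reduced to the first.

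For the first series, the idea is to expand
\[
1 - \frac{A_w(x)}{A(x)} = 1 - A_w(x)\bigl(1 - B(x)\bigr) = \bigl(1 - A_w(x)\bigr) + A_w(x) B(x)
\]
and read off coefficients: for $n \ge 1$ this gives $\left[x^n\right]\bigl(1 - A_w/A\bigr) = -w_n a_n + b_n + \sum_{k=1}^{n-1} w_k a_k b_{n-k}$. Then I would substitute the recursion \eqref{e:bool-tran2}, in the form $a_n = b_n + \sum_{k=1}^{n-1} a_{n-k} b_k$, into the term $-w_n a_n$, and reindex ($k \leftrightarrow n-k$) one of the resulting sums. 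After the cross terms cancel this should collapse to
\[
\left[x^n\right]\!\left(1 - \frac{A_w(x)}{A(x)}\right) = (1 - w_n)\, b_n + \sum_{k=1}^{n-1} (w_k - w_n)\, a_k\, b_{n-k}.
\]
Every summand is then visibly nonnegative: $1 - w_n \ge 1 - w_1 \ge 0$ and $w_k - w_n \ge 0$ since $(w_n)$ is weakly decreasing, while $a_k, b_k \ge 0$. This disposes of the first series.

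For the second series, set $E(x) := 1 - A_w(x)/A(x)$. By the previous step $E$ has nonnegative coefficients, and $E(0) = 0$. Since $A_w(x)/A(x) = 1 - E(x)$, we obtain
\[
\frac{A(x)}{A_w(x)} = \frac{1}{1 - E(x)} = \sum_{m\ge 0} E(x)^m,
\]
which is a legitimate formal power series because $E$ has zero constant term, and each $E(x)^m$ has nonnegative coefficients; hence so does the sum.

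The computation is short once the manipulation $A_w/A = A_w(1 - B)$ is in place; the one step that needs care is verifying the collapse to $(1-w_n) b_n + \sum_{k=1}^{n-1}(w_k - w_n) a_k b_{n-k}$ — i.e.\ that substituting \eqref{e:bool-tran2} and reindexing really does cancel all the unwanted terms. Once that identity holds, positivity is forced by $w_1 \le 1$ together with monotonicity of $(w_n)$, and the second series follows with no further work.
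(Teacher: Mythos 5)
Your proposal is correct and follows essentially the same route as the paper: expand $1-A_w/A$ as $1-A_w(1-B)$, substitute the recursion \eqref{e:bool-tran2} to collapse the $n$-th coefficient to $(1-w_n)b_n+\sum_{k=1}^{n-1}(w_k-w_n)a_kb_{n-k}$ (the paper's formula up to reindexing $k\leftrightarrow n-k$), and then obtain the second series as the inverse Boolean transform of the first. Your geometric-series expansion $\sum_{m\ge 0}E(x)^m$ is just an explicit form of the paper's appeal to \eqref{e:bool-tran3}.
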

\begin{proof}
Let $C(x):=\sum_{n\geq 1} c_n x^n$ denote the first power series above.
Let $b_n$ be the Boolean transform of $a_n$. In view of~\eqref{e:bool-tran},
\[
C(x)=1-\bigl(1+\sum_{n\geq 1} w_n a_n x^n\bigr)\bigl(1-\sum_{n\geq 1} b_n x^n\bigr).
\]
Hence, for $n\geq 1$,
\[
c_n=-w_na_n + \sum_{k=1}^{n-1} w_{n-k}a_{n-k} b_k + b_n.
\]
Combining with~\eqref{e:bool-tran2} we obtain
\begin{align*}
c_n &= -w_n\bigl(\sum_{k=1}^{n-1} a_{n-k}b_k + b_n\bigr) + \sum_{k=1}^{n-1} w_{n-k}a_{n-k} b_k + b_n\\
&= \sum_{k=1}^{n-1} (w_{n-k}-w_n)a_{n-k} b_k + (1-w_n)b_n.
\end{align*}
The nonnegativity of $b_n$ implies that of $a_n$, by~\eqref{e:bool-tran3}.
Also, $w_{n-k}-w_n\geq 0$ and $1-w_n\geq 0$ by hypothesis. Hence $c_n\geq 0$.

The second power series in the statement is $\frac{1}{1-C(x)}$, so its sequence
of nonconstant coefficients is the inverse Boolean transform of $c_n$. The nonnegativity
of these coefficients follows from that of the $c_n$, by~\eqref{e:bool-tran3}.
\end{proof}

\begin{lemma}\label{l:submul}
Let $a_n$ be a real sequence with nonnegative Boolean transform. Then
\[
a_s a_t \leq a_n
\]
for $n=s+t$ and every $s,t\geq 1$.
\end{lemma}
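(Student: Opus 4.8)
The plan is to use the recovery formula~\eqref{e:bool-tran3}, which expresses $a_n$ as a sum of $b_\alpha$ over all integer compositions $\alpha\vDash n$, and to exhibit the product $a_sa_t$ as a subsum of the sum defining $a_n$. First I would write
\[
a_s a_t = \Bigl(\sum_{\sigma\vDash s} b_\sigma\Bigr)\Bigl(\sum_{\tau\vDash t} b_\tau\Bigr) = \sum_{\sigma\vDash s}\sum_{\tau\vDash t} b_\sigma b_\tau.
\]
The key observation is that for $\sigma=(s_1,\ldots,s_j)\vDash s$ and $\tau=(t_1,\ldots,t_k)\vDash t$, the concatenation $\sigma\cdot\tau=(s_1,\ldots,s_j,t_1,\ldots,t_k)$ is a composition of $n=s+t$, and $b_{\sigma\cdot\tau}=b_{s_1}\cdots b_{s_j}b_{t_1}\cdots b_{t_k}=b_\sigma b_\tau$ directly from the definition $b_\alpha:=b_{n_1}\cdots b_{n_k}$. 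Moreover the concatenation map $(\sigma,\tau)\mapsto\sigma\cdot\tau$ is injective: from $\sigma\cdot\tau$ one recovers $\sigma$ by taking the (unique) initial segment of parts summing to $s$. Hence
\[
a_s a_t = \sum_{\substack{\alpha\vDash n\\ \alpha=\sigma\cdot\tau,\ \sigma\vDash s}} b_\alpha,
\]
a sum over a subset of the compositions of $n$.

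Next I would invoke the hypothesis that the Boolean transform is nonnegative, i.e.\ $b_m\ge 0$ for all $m\ge 1$, which forces $b_\alpha\ge 0$ for every composition $\alpha$. Therefore the subsum above is bounded above by the full sum $\sum_{\alpha\vDash n} b_\alpha = a_n$, again by~\eqref{e:bool-tran3}. This yields $a_sa_t\le a_n$, as desired. (One should note in passing that the same nonnegativity gives $a_m\ge 0$ for all $m$, so there are no sign subtleties in comparing the partial and full sums.)

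I do not anticipate a genuine obstacle here; the only point requiring a moment's care is the injectivity of concatenation, so that we are genuinely omitting nonnegative terms rather than double-counting — but this is immediate since the cut point between $\sigma$ and $\tau$ is determined by the prefix of parts of $\alpha$ whose sum first reaches $s$. An alternative, essentially equivalent, route would work at the level of power series: writing $B(x)=\sum_{n\ge1}b_nx^n$ and $A(x)=\sum_{n\ge1}a_nx^n$, one has $1+A(x)=1/(1-B(x))=\sum_{m\ge0}B(x)^m$, and nonnegativity of the $b_n$ makes every $B(x)^m$ a series with nonnegative coefficients; the coefficient of $x^n$ in $A(x)^2$ is then dominated coefficientwise by that in $\bigl(\sum_m B(x)^m\bigr)$ evaluated appropriately — but the combinatorial argument via~\eqref{e:bool-tran3} is cleaner and I would present that one.
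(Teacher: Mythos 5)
Your proof is correct and follows essentially the same route as the paper: expand $a_sa_t$ via~\eqref{e:bool-tran3}, identify the product terms with $b_{\sigma\cdot\tau}$ for concatenations $\sigma\cdot\tau\vDash n$, note that concatenation is injective, and bound the resulting subsum by $a_n$ using nonnegativity of the $b_m$. The paper's proof is a condensed version of exactly this argument.
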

\begin{proof}
According to~\eqref{e:bool-tran3}, we have
\[
a_s a_t = \bigl(\sum_{\sigma\vDash s} b_\sigma\bigr) \bigl(\sum_{\tau\vDash t} b_\tau\bigr) = \sum_{\substack{\sigma\vDash s\\ \tau\vDash t}} b_{\sigma\cdot \tau}
\leq \sum_{\alpha\vDash n} b_\alpha = a_n.
\]
The inequality holds in view of the nonnegativity of the $b_n$ and the fact that each $\sigma\cdot\tau$ is a distinct composition of $n$.
\end{proof}

\subsection{The Boolean transform and Hadamard products}\label{ss:bool-had}

Let $a_n$ and $b_n$ be two sequences, $n\geq 1$, and let $p_n$ and $q_n$
denote their Boolean transforms. Consider the Hadamard product $a_nb_n$ of
the given sequences, and let $r_n$ denote its Boolean transform. We provide
an explicit formula for $r_n$ in terms of the sequences $p_n$ and $q_n$.

\begin{proposition}\label{p:bool-had}
With the notation as above,
\begin{equation}\label{e:bool-had}
r_n = \sum_{\substack{\alpha,\beta\vDash n\\ \alpha\wedge\beta=(n)}} p_\alpha q_\beta.
\end{equation}
\end{proposition}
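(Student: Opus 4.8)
The plan is to deduce Proposition~\ref{p:bool-had} from the species-level result (Theorem~\ref{t:had-free-monoid}) together with the generating function identities already established. First I would fix species $\tp$ and $\tq$ whose dimension sequences realize the given data: it suffices to take $\tp[n]$ to be a vector space of dimension $a_n$ with trivial $\Sr_n$-action for each $n\geq 1$, so that $\type{\Tc(\tp)}{x} = \frac{1}{1-\type{\tp}{x}} = \frac{1}{1-\sum_{n\geq 1}a_nx^n}$ by~\eqref{e:type-free}; hence $\type{\tp}{x} = \sum_{n\geq 1} p_n x^n$ is the Boolean transform of $a_n$ by~\eqref{e:bool-tran}. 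Similarly choose $\tq$ with $\type{\tq}{x}=\sum_{n\geq 1}q_nx^n$. This converts the purely numerical claim into a statement about the species $\tp\star\tq$.

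Next I would compute $\type{\tp\star\tq}{x}$ in two ways. On one hand, by~\eqref{e:type-free}, $\type{\Tc(\tp\star\tq)}{x} = \frac{1}{1-\type{\tp\star\tq}{x}}$, so $\type{\tp\star\tq}{x}$ is precisely the Boolean transform of $\ordi{\Tc(\tp\star\tq)}{x}$. By Theorem~\ref{t:had-free-monoid}, $\Tc(\tp\star\tq)\cong\Tc(\tp)\times\Tc(\tq)$ as monoids, hence as species, so $\ordi{\Tc(\tp\star\tq)}{x} = \ordi{\Tc(\tp)}{x}\times\ordi{\Tc(\tq)}{x}$, which is the Hadamard product of $\frac{1}{1-\sum a_nx^n}$ and $\frac{1}{1-\sum b_nx^n}$, i.e. the generating series $\sum_{n\geq 0} a_nb_n x^n$. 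Therefore the Boolean transform $r_n$ of $a_nb_n$ equals $\dim_\Kb\bigl((\tp\star\tq)[n]\bigr)$, which since $\tp,\tq$ have finite dimension is $\sum_{n\geq 0}\dim_\Kb(\tp\star\tq)[n]\,x^n = \type{\tp\star\tq}{x}$ evaluated coefficientwise (the $\Sr_n$-action is trivial here, so ordinary and type generating functions of $\tp\star\tq$ need not coincide in general, but what we need is only the identity at the level of $\ordi{\cdot}{x}$ of the free monoid, which is what the computation above produces).

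Finally I would read off the closed form from the definition~\eqref{e:star} of $\star$:
\[
\dim_\Kb(\tp\star\tq)[n] = \sum_{\substack{F,G\vDash [n]\\ F\wedge G=([n])}} \dim_\Kb\tp(F)\,\dim_\Kb\tq(G).
\]
Grouping the set compositions $F$ of $[n]$ by their underlying integer composition $\alpha = (|I_1|,\dots,|I_k|)\vDash n$, and using $\dim_\Kb\tp(F) = p_{|I_1|}\cdots p_{|I_k|} = p_\alpha$ together with the observation that $F\wedge G=([n])$ forces (and is compatible with) $\alpha\wedge\beta=(n)$ for the associated integer compositions, the sum collapses to $\sum_{\alpha\wedge\beta=(n)} (\#\{F:\text{type }\alpha\})\,(\#\{G:\text{type }\beta\})\, p_\alpha q_\beta / (\text{overcount})$ — here I must be slightly careful. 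The cleanest route is to avoid counting set compositions altogether and instead argue directly on generating functions: expand $\type{\tp\star\tq}{x}$ using Lemma~\ref{l:star-prop} summed over all $H\vDash [n]$ is not needed; rather, note $\dim_\Kb(\tp\star\tq)[n]x^n$ summed over $n$ equals the Hadamard product $\type{\Tc(\tp)}{x}\times\type{\Tc(\tq)}{x}$ minus its constant term issue — but the honest statement is that \eqref{e:bool-tran3} applied twice gives $a_nb_n = \bigl(\sum_{\alpha\vDash n}p_\alpha\bigr)\bigl(\sum_{\beta\vDash n}q_\beta\bigr)$, and then one shows $r_n=\sum_{\alpha\wedge\beta=(n)}p_\alpha q_\beta$ satisfies the defining recursion~\eqref{e:bool-tran2} for the Boolean transform of $a_nb_n$. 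The main obstacle is exactly this last bookkeeping step: verifying that $\sum_{\alpha\wedge\beta=(n)}p_\alpha q_\beta$ together with its ``inverse'' $\sum_{\alpha,\beta\vDash n}p_\alpha q_\beta = a_nb_n$ are related by~\eqref{e:bool-tran}, which amounts to checking the analogue of~\eqref{e:bool-tran3} for pairs of compositions. This follows from the Boolean-lattice structure on compositions and the concatenation–meet compatibility~\eqref{e:meet-conc}: every pair $(\alpha,\beta)$ with $\alpha,\beta\vDash n$ decomposes uniquely as $(\alpha_1\cdot\cdots\cdot\alpha_m,\,\beta_1\cdot\cdots\cdot\beta_m)$ where each $(\alpha_i,\beta_i)$ has meet the one-block composition of its size, giving the sought recursion by the same argument that proves~\eqref{e:bool-tran3}.
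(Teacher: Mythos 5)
Your final paragraph is, in essence, the paper's own proof. The paper sets $\Tilde{r}_\gamma := \sum_{\alpha\wedge\beta=\gamma} p_\alpha q_\beta$, uses \eqref{e:meet-conc} together with the unique decomposition of a pair $(\alpha,\beta)$ along the blocks of $\gamma=\alpha\wedge\beta$ to show $\Tilde{r}_{\gamma\cdot\gamma'}=\Tilde{r}_\gamma\Tilde{r}_{\gamma'}$, hence $\Tilde{r}_\gamma=\Tilde{r}_{(n_1)}\cdots\Tilde{r}_{(n_k)}$, and then observes that $\sum_{\gamma\vDash n}\Tilde{r}_\gamma=\sum_{\alpha,\beta\vDash n}p_\alpha q_\beta=a_nb_n$ by \eqref{e:bool-tran3} applied to each factor; a second application of \eqref{e:bool-tran3} then identifies $\Tilde{r}_{(n)}$ as the Boolean transform of $a_nb_n$. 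The combinatorial core you arrive at --- every pair of compositions decomposes uniquely into a concatenation of pairs with trivial meet, then argue as for \eqref{e:bool-tran3} --- is exactly this; written out as multiplicativity of $\Tilde{r}$ plus the computation of $\sum_\gamma\Tilde{r}_\gamma$, it is a complete proof.

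The species-theoretic detour occupying the first two thirds of your write-up does not work as stated and should be cut (as you half-acknowledge yourself). Concretely: (i) \eqref{e:type-free} relates the \emph{type} generating function of $\Tc(\tp)$ to that of $\tp$; the analogous identity is false for ordinary generating functions, since $\dim_\Kb\Tc(\tp)[n]$ carries multinomial coefficients counting set compositions --- e.g.\ $\wL=\Tc(\wX)$ has $\ordi{\wL}{x}=\sum_n n!\,x^n$ while $\frac{1}{1-\ordi{\wX}{x}}=\frac{1}{1-x}$. So your identification of $\ordi{\Tc(\tp)}{x}$ with $\frac{1}{1-\sum a_nx^n}$, and hence of $r_n$ with $\dim_\Kb(\tp\star\tq)[n]$, is unjustified. (ii) The setup is internally inconsistent: if $\dim_\Kb\tp[n]=a_n$ with trivial action then $\type{\tp}{x}=\sum a_nx^n$, which you immediately also call $\sum p_nx^n$. (iii) The proposition concerns arbitrary scalar sequences, which need not be realizable as dimensions of species; reducing to the nonnegative-integer case would require an extra polynomiality argument you do not supply. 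A correct species-flavored proof does exist (via $\Tc(\tp)\times\Tc(\tq)\times\wL\cong\Tc(\tp\star\tq\star\wX)$, Proposition~\ref{p:ordi-type}, and coinvariant bookkeeping in the spirit of Proposition~\ref{p:globaldes}), but it is longer and less general than the direct argument of your last paragraph, which is the one to keep.
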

\begin{proof}
Define, for each $\gamma\vDash n$, a scalar 
\[
\Tilde{r}_\gamma:= \sum_{\substack{\alpha,\beta\vDash n\\ \alpha\wedge\beta=\gamma}} p_\alpha q_\beta.
\]

Fix two compositions $\gamma\vDash n$ and $\gamma'\vDash n'$. Let 
$n'':=n+n'$ and $\gamma'':=\gamma\cdot\gamma'\vDash n''$.
Let $\alpha,\beta\vDash n$ and $\alpha',\beta'\vDash n'$ be compositions such that
\[
\gamma= \alpha\wedge\beta
\qand
\gamma'= \alpha'\wedge\beta'.
\]
Let $\alpha'':=\alpha\cdot\alpha'$ and $\beta'':=\beta\cdot\beta'$. 
Then, by~\eqref{e:meet-conc},
\[
\alpha''\wedge\beta'' = (\alpha\cdot\alpha')\wedge(\beta\cdot\beta') =
(\alpha\wedge\beta)\cdot(\alpha'\wedge\beta')=
 \gamma\cdot\gamma' = \gamma''.
\]
Conversely, any pair of compositions $\alpha'',\beta''\vDash n''$ such that
$\alpha''\wedge\beta''= \gamma''$ arises as above from unique $\alpha,\alpha',\beta$ and $\beta'$. It follows that
\[
\Tilde{r}_\gamma \Tilde{r}_{\gamma'} = \sum_{\substack{\alpha,\beta\vDash n,\, 
\alpha',\beta'\vDash n'\\
\alpha\wedge\beta=\gamma,\, \alpha'\wedge\beta'=\gamma'}} p_\alpha q_\beta p_{\alpha'} q_{\beta'}
=  \sum_{\substack{\alpha'',\beta''\vDash n''\\\alpha''\wedge\beta''=\gamma''}} p_{\alpha''} q_{\beta''} = \Tilde{r}_{\gamma''}.
\]
This implies that, for $\gamma=(n_1,\ldots,n_k)$,
\[
\Tilde{r}_\gamma = \Tilde{r}_{(n_1)}\cdots \Tilde{r}_{(n_k)}.
\]

On the other hand, from the definition of $\Tilde{r}$ and~\eqref{e:bool-tran3}
we have that
\[
\sum_{\gamma\vDash n} \Tilde{r}_\gamma = \sum_{\alpha,\beta\vDash n} p_\alpha q_\beta = a_n b_n.
\]

The previous two equalities imply that the sequence $a_n b_n$ is the
inverse Boolean transform of the sequence $\Tilde{r}_{(n)}$, in view of~\eqref{e:bool-tran3}. Thus, $\Tilde{r}_{(n)}$ is the Boolean transform of $a_n b_n$ and the result follows.
\end{proof}

The first values of $r_n$ are as follows:
\begin{gather*}
r_1 = p_1q_1,\\
r_2 = p_2q_2+ p_2 q_1^2 + p_1^2 q_2,\\
r_3 = p_3 q_3 + 2p_3 q_2q_1 + 2 p_2p_1q_3 + 2p_2p_1q_2q_1 + p_1^3 q_3 + p_3 q_1^3.
\end{gather*}

\begin{corollary}\label{c:submonoid}
The set of real sequences with nonnegative Boolean transform
is closed under Hadamard products.
\end{corollary}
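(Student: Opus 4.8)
The plan is to read off the corollary directly from the explicit formula in Proposition~\ref{p:bool-had}. Suppose $a_n$ and $b_n$ are real sequences with nonnegative Boolean transforms, say $p_n\geq 0$ and $q_n\geq 0$ for all $n\geq 1$. By Proposition~\ref{p:bool-had}, the Boolean transform $r_n$ of the Hadamard product $a_nb_n$ is
\[
r_n = \sum_{\substack{\alpha,\beta\vDash n\\ \alpha\wedge\beta=(n)}} p_\alpha q_\beta,
\]
and each summand $p_\alpha q_\beta = p_{n_1}\cdots p_{n_j}\,q_{m_1}\cdots q_{m_l}$ is a product of nonnegative reals, hence nonnegative. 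A sum of nonnegative terms is nonnegative, so $r_n\geq 0$ for every $n$, which is exactly the statement that $a_nb_n$ has nonnegative Boolean transform.

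There is essentially no obstacle here: the work has all been done in Proposition~\ref{p:bool-had}, whose formula expresses $r_n$ as a manifestly nonnegative combination of the $p_\alpha$ and $q_\beta$ whenever those are nonnegative. The only thing to note is that the empty sum (if the condition $\alpha\wedge\beta=(n)$ were ever vacuous) is $0$, which is still nonnegative; in fact for each $n$ the pair $\alpha=\beta=(n)$ always satisfies the constraint, so $r_n\geq p_nq_n\geq 0$. Thus the proof is a one-line consequence of the preceding proposition.

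If one preferred to avoid invoking the closed formula, an alternative route would be to use Theorem~\ref{t:freeness} together with the generating-function identities: realize a sequence with nonnegative Boolean transform as $\ordi{\thh}{x}$ for a suitable connected Hopf monoid built from a free monoid $\Tc(\tq)$ via~\eqref{e:type-free2} and~\eqref{e:type-hadamard2}, and then the Hadamard product of two such corresponds to a Hadamard product of Hopf monoids, which is again free by Theorem~\ref{t:freeness}. But this is heavier machinery than needed, and it does not obviously cover all real (non-integer, or merely nonnegative rather than attainable) sequences, so the direct combinatorial argument from Proposition~\ref{p:bool-had} is the cleaner and more general proof.
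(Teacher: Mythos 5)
Your argument is correct and is exactly the paper's proof: the corollary follows at once from the formula of Proposition~\ref{p:bool-had}, since each summand $p_\alpha q_\beta$ is a product of nonnegative terms. The extra remarks (the pair $\alpha=\beta=(n)$ always contributing, and the heavier alternative via Theorem~\ref{t:freeness}) are fine but not needed.
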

\begin{proof}
This follows at once from~\eqref{e:bool-had}.
\end{proof}


\bibliographystyle{plain}  
\bibliography{hadamard}

\end{document}